\def \a {\alpha}
\def \D {\mathcal{D}}
\def \p {\partial}
\def \t {\theta}
\def \R {\mathbb{R}}
\def \e {\epsilon}
\def \g {\gamma}
\def \b {\beta}
\def \l {\lambda}
\def \L {\mathcal{L}}
\def \I {\uppercase\expandafter{\romannumeral1}}
\def \II {\uppercase\expandafter{\romannumeral2}}
\def \III {\uppercase\expandafter{\romannumeral3}}
\def \IV {\uppercase\expandafter{\romannumeral4}}
\def \V {\uppercase\expandafter{\romannumeral5}}
\def \VI {\uppercase\expandafter{\romannumeral6}}
\begin{document}
\newtheorem{definition}{Definition}
\renewcommand{\thedefinition}{\arabic{section}.\arabic{definition}}
\newtheorem{proposition}{Proposition}
\renewcommand{\theproposition}{\arabic{section}.\arabic{proposition}}
\newtheorem{theorem}{Theorem}
\renewcommand{\thetheorem}{\arabic{section}.\arabic{theorem}}
\newtheorem{lemma}{Lemma}
\renewcommand{\thelemma}{\arabic{section}.\arabic{lemma}}
\newtheorem{corollary}{Corollary}
\renewcommand{\thecorollary}{\arabic{section}.\arabic{corollary}}
\newtheorem{remark}{Remark}
\renewcommand{\theremark}{\arabic{section}.\arabic{remark}}
\renewcommand{\thefigure}{\arabic{section}.\arabic{figure}}
\renewcommand{\thetable}{\arabic{section}.\arabic{table}}

\journal{Applied Numerical Mathematics}

\begin{frontmatter}
\title{Spectral viscosity method with generalized Hermite functions for nonlinear conservation laws}

\author{ Xue Luo}
\address{School of Mathematics and Systems Science, Beihang University, Beijing, P. R. China, 100191 ({\tt xluo@buaa.edu.cn})}

\begin{abstract}
	In this paper, we propose new spectral viscosity methods based on the generalized Hermite functions for the solution of nonlinear scalar conservation laws in the whole line. It is shown rigorously that these schemes converge to the unique entropy solution by using compensated compactness arguments, under some conditions. The numerical experiments of the inviscid Burger's equation support our result, and it verifies the reasonableness of the conditions.
\end{abstract}

\begin{keyword}
spectral viscosity method, generalized Hermite functions, nonlinear conservation laws, compensated compactness arguments

\MSC 65M70 35L65 65M10
\end{keyword}

\end{frontmatter}

\section{Introduction}

\setcounter{equation}{0}

The spectral methods \cite{GO} approximate the exact solution of partial differential equations by seeking an ``good" projection in the linear subspace spanned by various orthogonal systems of special functions. The resulting spectral accuracy is highly preferred than any other numerical method, especially when the solution is known to be globally smooth enough. Therefore, they are very appropriate for the elliptic and parabolic equations, thanks to the regularization properties of the operators. When mentioning the nonlinear conservation laws, it is well known that the solution may develop spontaneous jump discontinuity, i.e., shock waves. This irregularity of the solution destroys not only the accuracy of the spectral approximations at the point of discontinuity, but also that in the entire computational domain. It causes the oscillations throughout the domain, which is the so-called Gibb's phenomenon. Moreover, the instability is induced in the nonlinear case. It is shown in \cite{Ta2} that the usual spectral approximate solution may not converge to the entropy solution, the physically relevant one.

Despite all these deficiencies, many mathematicians still pay their {efforts} to deal with these {issues}. The problems caused by the irregularity have already been solved for piecewise smooth functions in bounded domain or periodic piecewise smooth function in unbounded domain by filter techniques or reconstruction methods such as the Gegenbauer partial sum, see details in a series of papers \cite{GT}, \cite{GSSV}, \cite{GS}, \cite{V} and references therein. And the instability of the usual spectral approximations can be avoided by introducing the vanishing viscosity, which was first established by E. Tadmor \cite{Ta1}. The main idea of the spectral viscosity method is the use of artificial diffusion to stabilize the spectral computation without sacrificing its spectral accuracy. The periodic spectral {viscosity} method has been further investigated in \cite{MT}, \cite{Ta2} and \cite{Sch}, etc. The nonperiodic Legendre spectral viscosity method is first introduced by Y. Maday, et. al. \cite{MOT}. {H. Ma} proposed the nonperiodic Chebyshev-Legendre spectral viscosity method in \cite{Ma1}, \cite{Ma2}. For more literatures related to the spectral viscosity methods with various orthogonal basis in bounded domain, we refer the readers to \cite{CDT}, \cite{GeT}, \cite{GMT} and references therein.

As we know, a large amount of physical problems are modeled in unbounded domain. During the past two decades, {more attentions} were {attracted} to {the} numerical solutions of differential {equations  in} unbounded domains. Among the existing literature, the Hermite and Laguerre spectral methods are the most commonly used approaches based on orthogonal polynomials in infinite interval, referring to \cite{FK}, \cite{XW}. Although the Hermite polynomials appear to be a natural choice of orthogonal basis of $L^2(\mathbb{R})$, it is not as popular as Fourier series and Chebyshev polynomials, due to its poor resolution (see \cite{GO}) and the lack of the analogue of fast Fourier transformation (FFT), see \cite{B2}. However, it is shown in \cite{B} that the poor resolution can be remedied by a suitable choice of scaling factor. Some further investigations on the scaling factor can be found in \cite{Tang} {and Chapter} 7, \cite{STW}. Recently, a {practical} guideline of choosing the suitable scaling factors for Gaussian/super-Gaussian functions is summarized by {S. S.-T. Yau and the author} in \cite{LY}, where the Hermite spectral method is used to resolve the {posterior} conditional density function of the states in nonlinear filtering problems.

The literatures on the spectral method in unbounded domains have already been not as rich as those in bounded domains, let alone the spectral viscosity method in unbounded domains. As far as we know, J. Aguirre and J. Rivas \cite{AR} is the only paper that considered the spectral viscosity method based on the Hermite functions. However, they defined the Hermite functions in the weighted $L_w^2(\mathbb{R})$, where $w(x)={e^{x^2}}$. {No} scaling factor is introduced {there}. This essentially causes their involved theoretical proof of the convergence rate of their proposed scheme. {Also} it is more costly when they try to implement their scheme numerically.

In this paper, we shall revisit the nonlinear scalar conservation laws in $\R$:
\begin{equation}\label{eqn-nonlinear conservation}
	\left\{\begin{aligned}
		\frac{\partial u}{\partial t}+\frac{\partial f(u)}{\partial x}=&0,\quad x\in\mathbb{R},\,t>0\\
		u(x,0)=&u_0(x),\quad x\in\mathbb{R},
	\end{aligned}\right.
\end{equation}
where $f\in C^1$ is a smooth nonlinear function and $u_0\in L^\infty(\mathbb{R})$. In general, the spontaneous jump discontinuity may be developed. Therefore, we {can not} expect the classical solutions to this problem. Moreover, we restrict ourselves to the physically relevant weak solution, the entropy solution, by imposing the entropy condition
\begin{align}\label{eqn-entropy condition}
	\frac{\partial U(u)}{\partial t}+\frac{\partial F(u)}{\partial x}\leq0
\end{align}
in the sense of distributions, for all entropy pairs $(U, F)$, with $U\in C^2$ convex and $F'(u)=U'(u)f'(u)$, see \cite{Sm}.

We propose Hermite spectral viscosity methods based on generalized Hermite functions with two different viscosity terms. 
\begin{enumerate}
\item[(I)] with viscosity term $\e\p_x\D_xu$:
{The approximate solution $u_N$ is obtained by solving}
\begin{equation}\label{eqn-spectral scheme}
	\left\{\begin{aligned}
		\partial_tu_N+\partial_x(P_{N+1}f(u_N))-\epsilon_N\p_x\D_xQ_{m_N}u_N=&0,\quad x\in\mathbb{R},\,t\in(0,T),\\
		u_N(x,0)=&P_Nu_0(x),\quad x\in\mathbb{R},
	\end{aligned}\right.
\end{equation}
where {$P_{N+1}$ is an $L^2$-orthogonal projection operator defined in \eqref{eqn-L2projection}, $\epsilon_N\rightarrow0$} is a positive parameter as $N$ tends to $\infty$, and $Q_{m_N}$ is a viscosity operator which modifies only the high modes of the Fourier-Hermite expansion{. That} is,
\begin{align}\label{eqn-Q}
	Q_{m_N}\left(\sum_{k=0}^N\hat\phi_k(t)H_k^\alpha(x)\right)=\sum_{k=0}^N\hat q_k\hat\phi_k(t)H_k^\alpha(x),
\end{align}
with
\begin{equation}\label{eqn-q}
	\left\{\begin{aligned}
		\hat q_k=0,\quad&\textup{if}\quad k\leq m_N\\
		1-\frac{m_N}k\leq\hat q_k<1,\quad&\textup{if}\quad k>m_N
	\end{aligned}\right.,
\end{equation}
and $m_N<N$ is a positive integer {tending} to $\infty$ {as $N$ tends to $\infty$}{, where $H_k^\a(x)$ are the generalized Hermite functions defined in \eqref{new hermite}.}
\item[(II)] with viscosity term $\e\L_\a u$: {The approximate solution $v_N$ is obtained by solving}
\begin{equation}\label{eqn-spectral scheme-v}
	\left\{\begin{aligned}
		\partial_tv_N+\partial_x(P_{N+1}f(v_N))+\epsilon_N\L_\a v_N=&0,\quad x\in\mathbb{R},\,t\in(0,T),\\
		v_N(x,0)=&P_Nu_0(x),\quad x\in\mathbb{R},
	\end{aligned}\right.
\end{equation}
where {$P_{N+1}$, $\epsilon_N$ are the same as those in (I), and} $\L_\a$ is defined in \eqref{S-L}.
\end{enumerate}

{Nevertheless, compared} to the scheme in \cite{AR}, the schemes in our paper have at least two advantages:
\begin{itemize}
	\item {Our scheme (II) is numerically stable}, due to its symmetry and positivity, while the stability of the scheme in \cite{AR} and {our scheme (I) can not be} guaranteed;
	\item Our schemes can be implemented {efficiently} with the help of the scaling factor. The better resolution and fewer oscillations are retained with much smaller truncation terms $N$ even without viscosity.
\end{itemize}

{In this paper, we} {shall develop two efficient} schemes to solve the nonlinear conservation laws in $\R$. The convergences { of the schemes have been shown} under some reasonable condtions \eqref{eqn-condition on xu_N} or \eqref{cond-x2v_N}. It is hard to tell whether these conditions are weaker or {stronger} than the one in \cite{AR}, i.e., $||xu_N||_\infty<C$, independent of $N$, due to the unbounded domain $\R\times(0,T)$.

The paper is organized as follows. In section 2, we give the definition of the generalized Hermite functions and their properties. The new Hermite spectral viscosity methods are proposed { and their convergences have been rigorously shown} in section 3. {In section 4, the inviscid Burger's equation has been numerically solved by our schemes.} The reasonableness of the conditions { in the convergence theorems} have been verified numerically.

\section{Generalized Hermite functions}

\setcounter{equation}{0}

In this section, we introduce the generalized Hermite functions and
derive {their properties inherited} from the Hermite
polynomials. 

Let $L^2(\mathbb{R})$ be the Lebesgue space, equipped with the
norm $||\cdot||=(\int_{\mathbb{R}}|\cdot|^2 dx)^{\frac12}$ and the
scalar product $\langle\cdot,\cdot\rangle$. {Moreover, we shall denote $||\circ||_\infty=||\circ||_{L^\infty(\mathbb{R}\times(0,T))}=\sup_{\mathbb{R}\times[0,T]}|\circ|$ and $||\circ||^2_{L^2(0,T;L^2(\mathbb{R}))}:=\int_0^T||\circ||^2dt$.}

In the sequel, we shall follow the {conventional notations} in the asymptotic analysis{:}
$a\sim b$ means that there {exist} some generic constants $C_1,C_2>0$ such
that $C_1a\leq b\leq C_2a$; $a\lesssim b$ means that there exists
some generic constant $C_3>0$ such that $a\leq C_3b$.

Let $\mathcal{H}_n(x)$  be the physical Hermite polynomials, i.e.,
$\mathcal{H}_n(x)=(-1)^ne^{x^2}\partial_x^ne^{-x^2}$, $n\geq0$. The three-term recurrence
\begin{align}\label{recurrence}
    \mathcal{H}_0\equiv1,\quad \mathcal{H}_1(x)=2x\quad\textup{and}\quad
    \mathcal{H}_{n+1}(x)=2x\mathcal{H}_n(x)-2n\mathcal{H}_{n-1}(x).
\end{align}
is {handy} in implementation. One of the well-known and useful
fact of Hermite polynomials is that they are mutually orthogonal
with respect to the weight $w(x)=e^{-x^2}$. We define the generalized Hermite functions with the scaling factor $\alpha>0$ as
\begin{align}\label{new hermite}
    H_n^{\alpha}(x)=\left(\frac{\alpha}{2^nn!\sqrt{\pi}}\right)^{\frac12}\mathcal{H}_n(\alpha x)e^{-\frac12\alpha^2x^2},
\end{align}
for $n\geq0$. It is readily to derive the following properties for
the generalized Hermite functions (\ref{new hermite}):
\begin{enumerate}
	\item [$\blacksquare$] \label{orthogonal of H_n^alpha} The $\{H_n^{\alpha}(x)\}_{n\in\mathbb{Z^+}}$ forms an orthonormal basis of
    $L^2(\mathbb{R})$, i.e.
        \begin{align}\label{orthogonal}
            \int_{\mathbb{R}}H_n^{\alpha}(x)H_m^{\alpha}(x)dx=\delta_{nm},
        \end{align}
    where $\delta_{nm}$ is the Kronecker function.
    \item [$\blacksquare$]\label{eigenvalue} $H_n^{\alpha}(x)$ is the $n${th} eigenfunction of the following Strum-Liouville problem
        {\begin{align}\label{S-L}
            \mathcal{L}_\alpha u(x):=-e^{\frac12\alpha^2x^2}\frac d{dx}\left(e^{-\alpha^2x^2}\frac d{dx}\left(e^{\frac12\alpha^2x^2}u(x)\right)\right)=\lambda_nu(x),
        \end{align}}
    with the corresponding eigenvalue
	\begin{equation}\label{eqn-lambda}
		\lambda_n=2\alpha^2n.
	\end{equation}
    \item [$\blacksquare$] By convention, $H_n^{\alpha}\equiv0$, for $n<0$. For $n\geq0$, the three-term recurrence is inherited from the
    Hermite polynomials:
    \begin{align}\label{recurrence for RT hermite function}
        xH_n^{\alpha}(x)=&\frac{\sqrt{\lambda_{n+1}}}{2\alpha^2}H_{n+1}^{\alpha}(x)
       +\frac{\sqrt{\lambda_n}}{2\alpha^2}H_{n-1}^{\alpha}(x).
    \end{align}
    \item [$\blacksquare$] The derivative of $H_n^{\alpha}(x)$ with respect to $x$ {gives}
    \begin{align}\label{derivative_x}
        {\frac d{dx}}H_n^{\alpha}(x)
       =&-\frac{\sqrt{\lambda_{n+1}}}2H_{n+1}^{\alpha}(x)
            +\frac{\sqrt{\lambda_n}}2H_{n-1}^{\alpha}(x).
    \end{align}
	For convenience, let $\mathcal{D}_x={\frac d{dx}}+\alpha^2x$. Then
	\begin{align}\label{eqn-D_xH_n}		\mathcal{D}_xH_n^\alpha(x)=\sqrt{2\alpha^2n}H_{n-1}^\alpha(x)=\sqrt{\lambda_n}H_{n-1}^\alpha(x).
	\end{align}
    \item [$\blacksquare$] The ``orthogonality" of $\{\mathcal{D}_xH_n^\alpha(x)\}_{n\in\mathbb{Z}^+}$ follows immediately from \eqref{orthogonal}, i.e.,
\begin{equation}\label{eqn-orthogonality of D_xH}
\int_{\mathbb{R}}\mathcal{D}_xH_n^\alpha(x)\mathcal{D}_xH_m^\alpha(x)dx
	= 2\alpha^2n\delta_{nm}=\lambda_n\delta_{nm}{.}
\end{equation}
\end{enumerate}

Any function $u(x)\in L^2(\mathbb{R})$ can be written in the
form
\begin{equation}\label{Hermite representation}
    u(x)=\sum_{n=0}^\infty\hat{u}_nH_n^{\alpha}(x),
\end{equation}
with
\begin{equation}\label{eqn-FH coefficient}
    \hat{u}_n=\int_{\mathbb{R}}u(x)H_n^{\alpha}(x)dx{,}
\end{equation}
where $\{\hat{u}_n\}_{n=0}^\infty$ are the Fourier-Hermite
coefficients.

Let us denote the linear subspace of $L^2(\mathbb{R})$ spanned by the first $N$ {generalized} Hermite functions by
\begin{equation}\label{RN}
	\mathcal{R}_N:=\textup{span}\{H_0^\alpha(x),\cdots,H_N^\alpha(x)\}.
\end{equation}

\begin{remark}\label{remark-equivalence of derivatives}
	Actually, we have the norms $||{\frac d{dx}}\phi||$ controlled by $||\mathcal{D}_x\phi||$ and $||\phi||$, for any $\phi\in\mathcal{R}_N$. Let us consider
{\begin{align}\label{eqn-equivalence of d_x and D_x}\notag
	\left|\left|\frac d{dx}\phi\right|\right|^2=&\left|\left|\sum_{k=0}^N\hat{\phi}_k\left(-\frac{\sqrt{\lambda_{k+1}}}2H_{k+1}^\alpha+\frac{\sqrt{\lambda_k}}2H_{k-1}^\alpha\right)\right|\right|^2\\\notag
	=&\sum_{k,l=0}^N\hat{\phi}_k\hat{\phi}_l\int_{\mathbb{R}}\left(\frac{\sqrt{\lambda_{k+1}}}2H_{k+1}^\alpha+\frac{\sqrt{\lambda_k}}2H_{k-1}^\alpha\right)\left(-\frac{\sqrt{\lambda_{l+1}}}2H_{l+1}^\alpha+\frac{\sqrt{\lambda_l}}2H_{l-1}^\alpha\right)dx\\\notag
\overset{\eqref{orthogonal}}=&\frac14\sum_{k=0}^N\hat{\phi}_k^2(\lambda_{k+1}+\lambda_k)-\frac14\sum_{k=0}^{N-2}\hat{\phi}_{k+2}\hat{\phi}_k\sqrt{\lambda_{k+2}\lambda_{k+1}}-\frac14\sum_{k=2}^N\hat{\phi}_k\hat{\phi}_{k-2}\sqrt{\lambda_k\lambda_{k-1}}\\
\leq&\frac12\sum_{k=0}^N\hat{\phi}_k^2\lambda_{k+1}+\frac12\sum_{k=0}^N\hat{\phi}_k^2\lambda_k
\overset{\eqref{eqn-lambda}}=\sum_{k=0}^N\hat{\phi}_k^2\lambda_k+\alpha^2\sum_{k=0}^N\hat{\phi}_k^2
=||\mathcal{D}_x\phi||^2+\a^2||\phi||^2,
\end{align}
where the inequality follows from the fact that
\begin{align*}
	\left|\hat\phi_{k+2}\hat\phi_k\sqrt{\lambda_{k+2}\lambda_{k+1}}\right|
	\leq&\frac12\left(\hat\phi_{k+2}^2\lambda_{k+2}+\hat\phi_k^2\lambda_{k+1}\right),
\end{align*}
for $k=0,\cdots,N-2$.} Similarly, we can get
\begin{equation}\label{eqn-equivalence of xphi}
	||x\phi||^2\lesssim\frac1{\a^4}\left[||\D_x\phi||^2+\a^2||\phi||^2\right].
\end{equation}
\end{remark}

We define the $L^2$-orthogonal projection $P_N^\alpha:\,L^2(\mathbb{R})\rightarrow\mathcal{R}_N${:} given $v\in L^2(\mathbb{R})$, {we have}
\begin{equation}\label{eqn-L2projection}
	\langle v-P_N^\alpha v,\phi\rangle=0,
\end{equation}
for all $\phi\in\mathcal{R}_N$. More precisely, {it can be written as}
\[
	P_N^\a v(x)=\sum_{n=0}^N\hat{v}_nH_n^\alpha(x),
\]
where $\hat{v}_n$, $n=0,\ldots,N$, are the Fourier-Hermite coefficients defined in {\eqref{eqn-FH coefficient}}.

{To establish the convergence rate of the Hermite spectral
method, we shall also state the convergence rate of the orthogonal approximation.} The error estimate of the orthogonal projection onto $\mathcal{R}_N$ is readily shown in Theorem 4.2, \cite{SW} for $\alpha=1$ and it can be trivially extended for $\alpha>0${. }
\begin{lemma}\label{lemma-orthogonal error}
	For any $\mathcal{D}_x^mu\in L^2(\mathbb{R})$ with $m\geq0$,
\[
	||\mathcal{D}_x^l(u-P_N^\alpha u)||\lesssim \alpha^{l-m}N^{\frac{l-m}2}||\mathcal{D}_x^mu||,\quad0\leq l\leq m.
\]
\end{lemma}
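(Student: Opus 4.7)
The plan is to exploit the spectral structure of $\mathcal{D}_x$ on the basis $\{H_n^\alpha\}$, which essentially diagonalizes the problem. First, I would expand $u=\sum_{n=0}^\infty \hat u_n H_n^\alpha$ and iterate the identity $\mathcal{D}_xH_n^\alpha=\sqrt{\lambda_n}H_{n-1}^\alpha$ from \eqref{eqn-D_xH_n} to obtain, for $n\geq m$,
\[
\mathcal{D}_x^m H_n^\alpha=\sqrt{\lambda_n\lambda_{n-1}\cdots\lambda_{n-m+1}}\,H_{n-m}^\alpha=\sqrt{(2\alpha^2)^m\tfrac{n!}{(n-m)!}}\,H_{n-m}^\alpha.
\]
Because the shifted family $\{H_{n-m}^\alpha\}_{n\geq m}$ is still orthonormal, Parseval gives
\[
\|\mathcal{D}_x^m u\|^2=\sum_{n\geq m}|\hat u_n|^2\,(2\alpha^2)^m\tfrac{n!}{(n-m)!}.
\]

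Since $u-P_N^\alpha u=\sum_{n>N}\hat u_n H_n^\alpha$, the same computation applied with $l$ in place of $m$ yields
\[
\|\mathcal{D}_x^l(u-P_N^\alpha u)\|^2=\sum_{n>N}|\hat u_n|^2\,(2\alpha^2)^l\tfrac{n!}{(n-l)!}.
\]
Then I would factor out the weight appearing in $\|\mathcal{D}_x^m u\|^2$, leaving the ratio
\[
\frac{(2\alpha^2)^l\,n!/(n-l)!}{(2\alpha^2)^m\,n!/(n-m)!}=\frac{1}{(2\alpha^2)^{m-l}(n-m+1)(n-m+2)\cdots(n-l)}.
\]
For $n>N$ and $N$ sufficiently large compared with the fixed integer $m$, each of the $m-l$ factors in the denominator is bounded below by $N-m+1\gtrsim N$, hence the ratio is $\lesssim\alpha^{-2(m-l)}N^{-(m-l)}$, uniformly in $n>N$.

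Factoring this bound out of the tail sum gives
\[
\|\mathcal{D}_x^l(u-P_N^\alpha u)\|^2\lesssim\alpha^{2(l-m)}N^{l-m}\sum_{n>N}|\hat u_n|^2(2\alpha^2)^m\tfrac{n!}{(n-m)!}\leq\alpha^{2(l-m)}N^{l-m}\|\mathcal{D}_x^m u\|^2,
\]
and taking square roots yields the claim. An equivalent, even shorter route is to reduce to the case $\alpha=1$ by the change of variable $y=\alpha x$ and $\tilde u(y)=\alpha^{-1/2}u(y/\alpha)$, under which $\|\mathcal{D}_x^k u\|=\alpha^k\|\tilde{\mathcal{D}}_y^k\tilde u\|$ and $P_N^\alpha u$ corresponds to $P_N^1\tilde u$; the $\alpha$-dependence then follows by bookkeeping from the $\alpha=1$ statement in Theorem 4.2 of \cite{SW}. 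The only subtle point is the combinatorial bound on the ratio of falling factorials, which requires $N\gg m$; since $m$ is fixed and $N\to\infty$, this is harmless, and no serious obstacle arises.
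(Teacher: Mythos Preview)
Your proof is correct. The paper itself gives no proof of this lemma, merely citing Theorem~4.2 of \cite{SW} for the case $\alpha=1$ and stating that the extension to general $\alpha>0$ is trivial; your direct Parseval computation is precisely the standard argument behind that cited result, and your alternative scaling reduction $y=\alpha x$ is exactly the ``trivial extension'' the paper alludes to.
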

In the sequel, the superscript $\alpha$ in $P_N^\alpha$ will be dropped if no confusion will arise.

\section{The Hermite spectral viscosity method}
\setcounter{equation}{0}
\setcounter{theorem}{0}

It is well known that the entropy solution to \eqref{eqn-nonlinear conservation} can be obtained as the limit when the {artificially} introduced viscosity term vanishes. In this section, we shall introduce two appropriate viscosity terms $\epsilon\p_x\D_xu$ and  $\e\L_\a u$. {The convergences of both schemes {will} be shown under the assumption that the approximate solutions are uniformly bounded in $L^\infty$ norm. It is {well known} that the numerical solution {is bounded} in a finite interval, but not that in unbounded domain. {This interesting question will not be discussed in this paper}.} 

With the viscosity term $\e\p_x\D_xu$, the viscosity operator $Q_{m_N}$ has been introduced in the spectral scheme as in \cite{AR}, where only the high frequency terms appear in the artificial viscosity. The convergence of this scheme has been shown under the condition \eqref{eqn-condition on xu_N}. The reasonableness of this condition { in the inviscid Burger's equation} has been verified in Table \ref{table-1}.

\subsection{With viscosity term $\e\p_x\D_xu$}
 Let us {discuss} the viscosity operator $Q_{m_N}$ {first}.
\begin{lemma}\label{lemma-D_x<D_xQ}
	Let $Q_{m_N}$ be defined as in \eqref{eqn-Q} {and} \eqref{eqn-q}. Then
\begin{align}\label{eqn-D_x<D_xQ}
	||\mathcal{D}_x\phi||^2\lesssim ||\mathcal{D}_xQ_{m_N}\phi||^2+\alpha^2m_N^2||\phi||^2,
\end{align}
and
\begin{align}\label{eqn-D_xQ<D_x}
	||\mathcal{D}_xQ_{m_N}\phi||^2\lesssim ||\mathcal{D}_x\phi||^2+\alpha^2m_N^2||\phi||^2{,}
\end{align}
for all $\phi\in\mathcal{R}_N$.
\end{lemma}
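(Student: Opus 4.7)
The plan is to expand $\phi\in\mathcal{R}_N$ in the orthonormal basis $\{H_k^\alpha\}_{k=0}^N$ and convert both norms to weighted $\ell^2$ sums over the Fourier--Hermite coefficients. Writing $\phi=\sum_{k=0}^N\hat\phi_k H_k^\alpha$ and applying \eqref{eqn-D_xH_n} together with the orthogonality relation \eqref{eqn-orthogonality of D_xH} gives
$$||\mathcal{D}_x\phi||^2=\sum_{k=0}^N\hat\phi_k^2\lambda_k,\qquad ||\mathcal{D}_xQ_{m_N}\phi||^2=\sum_{k=0}^N\hat q_k^2\hat\phi_k^2\lambda_k,$$
with $\lambda_k=2\alpha^2k$. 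This reduces both inequalities to elementary statements about scalar sequences.

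The bound \eqref{eqn-D_xQ<D_x} is almost immediate: since $\hat q_k=0$ for $k\leq m_N$ and $\hat q_k<1$ for $k>m_N$, one has $\hat q_k^2\leq 1$ termwise, hence $||\mathcal{D}_xQ_{m_N}\phi||^2\leq ||\mathcal{D}_x\phi||^2$, and the extra $\alpha^2m_N^2||\phi||^2$ is free.

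For \eqref{eqn-D_x<D_xQ}, I would split the sum at the cutoff $k=m_N$. On the low-mode block $k\leq m_N$, use $\lambda_k=2\alpha^2k\leq 2\alpha^2m_N^2$ to conclude $\sum_{k\leq m_N}\hat\phi_k^2\lambda_k\lesssim \alpha^2m_N^2||\phi||^2$. On the high-mode block $k>m_N$, exploit the defining inequality $\hat q_k\geq 1-m_N/k$, i.e.\ $1\leq \hat q_k+m_N/k$; squaring and applying the elementary $(a+b)^2\leq 2a^2+2b^2$ yields $1\leq 2\hat q_k^2+2(m_N/k)^2$, so
$$\lambda_k\leq 2\hat q_k^2\lambda_k+2(m_N/k)^2\lambda_k.$$
Since $(m_N/k)^2\lambda_k=2\alpha^2m_N^2/k\leq 2\alpha^2 m_N$ for $k>m_N$, the residual contribution is absorbed into $\alpha^2m_N^2||\phi||^2$ after summing in $k$. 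Adding the two blocks gives \eqref{eqn-D_x<D_xQ}.

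No serious obstacle is foreseen; the lemma is a bookkeeping exercise in the spirit of the classical Maday--Ould~Kaber--Tadmor viscosity analysis. The only subtle point is to match the prescribed lower bound $\hat q_k\geq 1-m_N/k$ with the eigenvalue growth $\lambda_k\sim k$: this compatibility is what makes the residual factor $(m_N/k)^2\lambda_k$ uniformly $O(\alpha^2 m_N)$ on the high-mode block, and it is precisely the reason the kernel $\hat q_k$ in \eqref{eqn-q} is prescribed with the $m_N/k$ tail.
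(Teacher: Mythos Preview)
Your argument is correct and in fact more direct than the paper's. Both proofs begin from the same Parseval identities
\[
\|\mathcal{D}_x\phi\|^2=\sum_{k=0}^N\hat\phi_k^2\lambda_k,\qquad \|\mathcal{D}_xQ_{m_N}\phi\|^2=\sum_{k=0}^N\hat q_k^2\hat\phi_k^2\lambda_k,
\]
but the paper then introduces $R_{m_N}=I-Q_{m_N}$ and estimates $\|\mathcal{D}_xR_{m_N}\phi\|^2$ via a dyadic decomposition $\phi=\sum_{k\le m_N}\hat\phi_kH_k^\alpha+\sum_{j=1}^J\phi^j$, applying on each block the crude operator bound $\|\mathcal{D}_xR\psi\|^2\le\bigl(\sum_k\hat r_k^2\lambda_k\bigr)\|\psi\|^2$. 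The dyadic splitting is needed there precisely because that operator bound, applied globally, would produce an unwanted $\log(N/m_N)$ factor from $\sum_{k>m_N}^N(m_N/k)^2\lambda_k\sim\alpha^2m_N^2\log(N/m_N)$.

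You sidestep this entirely by working termwise with the coefficients rather than passing through an operator norm: the pointwise inequality $1\le 2\hat q_k^2+2(m_N/k)^2$ keeps the $\hat\phi_k^2$ weight attached, so no logarithm appears and no dyadic bookkeeping is required. Your observation that \eqref{eqn-D_xQ<D_x} follows immediately from $\hat q_k^2\le1$ is likewise sharper than the paper's detour through $R_{m_N}$. The paper's route does have one incidental payoff: it isolates the residual estimate $\|\mathcal{D}_xR_{m_N}\phi\|^2\lesssim\alpha^2m_N^2\|\phi\|^2$ (its equation (3.10)), which is invoked later in the proof of Theorem~\ref{thm-u}. Your termwise method yields that bound just as easily, by the same low/high split, should you need it downstream.
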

{\bf Proof.}\quad Let us show \eqref{eqn-D_x<D_xQ} in detail only, and \eqref{eqn-D_xQ<D_x} can be obtained by the similar argument. Let $\phi=\sum_{k=0}^N\hat\phi_kH_k^\alpha(x)$ and $R_{m_N}=I-Q_{m_N}$, where $I$ is the identity operator{, then}
\begin{equation}\label{eqn-split Q}
	||\mathcal{D}_x\phi||^2\lesssim||\mathcal{D}_xQ_{m_N}\phi||^2+||\mathcal{D}_xR_{m_N}\phi||^2.
\end{equation}
We split $\phi$ in dyadic parts $\phi(x)=\sum_{k=0}^{m_N}\hat\phi_kH_k^\alpha(x)
+\sum_{j=1}^J\phi^j(x)$, where
\[
	\phi^j(x)=\sum_{k=2^{j-1}m_N+1}^{2^jm_N}\hat\phi_kH_k^\alpha(x),
\]
$j=1,\cdots,J$. Here{ $J={\lfloor\log_2\left(\frac N{m_N}\right)\rfloor}+1$} and $\hat\phi_k=0$ for $k=N+1,\cdots,2^Jm_N$. {The notation {$\lfloor\circ\rfloor$} means the largest integer less than or equal to $\circ$.} From the orthogonality relation \eqref{eqn-orthogonality of D_xH}, one has
\begin{equation}\label{eqn-bound two terms of D_xR}
	||\mathcal{D}_xR_{m_N}\phi||^2
	=\left|\left|\mathcal{D}_xR_{m_N}\sum_{k=0}^{m_N}\hat\phi_kH_k^\alpha\right|\right|^2+\sum_{j=1}^J||\mathcal{D}_xR_{m_N}\phi^j||^2.
\end{equation}
{ Recall that given a linear operator $R$} defined in $\mathcal{R}_N$ such that
\[
	R\left(\sum_{k=0}^N\hat\phi_kH_k^\alpha(x)\right)
	=\sum_{k=0}^N\hat{r}_k\hat\phi_kH_k^\alpha(x),
\]
where $\hat{r}_0,\cdots,\hat{r}_N$ are real numbers. Then for all $\phi\in\mathcal{R}_N$,
\begin{align}\label{eqn-R operator}
	||\mathcal{D}_xR\phi||^2
	\overset{\eqref{eqn-orthogonality of D_xH}}=\sum_{k=0}^N\hat{r}_k^2\hat\phi_k^2\lambda_k
	\leq\left(\sum_{k=0}^N\hat{r}_k^2\lambda_k\right)\left(\sum_{k=0}^N\hat\phi_k^2\right)
	=\left(\sum_{k=0}^N\hat{r}_k^2\lambda_k\right)||\phi||^2.
\end{align}
{We shall bound each summand on the right-hand side of \eqref{eqn-bound two terms of D_xR}. For the first summand, we have} 
\begin{equation}\label{eqn-estimate small}
	\left|\left|\mathcal{D}_xR_{m_N}\sum_{k=0}^{m_N}\hat\phi_kH_k^\alpha\right|\right|
	\overset{\eqref{eqn-R operator}}\leq\left(\sum_{k=0}^{m_N}(1-\hat q_k)^2\lambda_k\right)\left(\sum_{k=0}^{m_N}\hat\phi_k^2\right)
	\lesssim \alpha^2m_N^2\left(\sum_{k=0}^{m_N}\hat\phi_k^2\right)
	=\alpha^2m_N^2\left|\left|\sum_{k=0}^{m_N}\hat\phi_kH_k^\alpha\right|\right|^2{,}
\end{equation}
{since $\hat{q}_k=0$, for $k\leq m_N$;} while for the second summand, we obtain that, for any $j=1,\cdots,J$,
\begin{equation}\label{eqn-estimate large}
	||\mathcal{D}_xR_{m_N}\phi^j||^2
	\overset{\eqref{eqn-R operator}}\lesssim\left(\sum_{k=2^{j-1}m_N+1}^{2^jm_N}(1-\hat q_k)^2\lambda_k\right)||\phi^j||^2
	\lesssim\alpha^2m_N^2\sum_{k=2^{j-1}m_N+1}^{2^jm_N}\frac1k||\phi^j||^2
	\lesssim\alpha^2m_N^2||\phi^j||^2{,}
\end{equation}
{since $\hat q_k\geq1-\frac{m_N}k$.} {Combining} \eqref{eqn-estimate small} and \eqref{eqn-estimate large}, {it yields that}
\begin{align}\label{eqn-estimate derivative of R phi}
	\left|\left|\mathcal{D}_xR_{m_N}\phi\right|\right|^2
	\lesssim\alpha^2m_N^2\left(\left|\left|\sum_{k=0}^{m_N}\hat\phi_k^2H_k^\alpha\right|\right|^2+\sum_{j=1}^J||\phi^j||^2\right)
	\lesssim\alpha^2m_N^2||\phi||^2.
\end{align}
Substituting {\eqref{eqn-estimate derivative of R phi}} back to \eqref{eqn-split Q}, we get the desired result \eqref{eqn-D_x<D_xQ}. {Equation} \eqref{eqn-D_xQ<D_x} {follows similarly from} {\eqref{eqn-estimate derivative of R phi} and}
\[
	||\mathcal{D}_xQ_{m_N}\phi||^2\lesssim||\mathcal{D}_x\phi||^2+||\mathcal{D}_xR_{m_N}\phi||^2.
\]
\hfill{$\Box$}

The apriori estimates on the approximate solution $u_N$ are obtained in the following lemma. {The technical condition \eqref{eqn-condition on xu_N} is necessary if we {want some control} on $||\D_xQ_{m_N}u_N||_{L^2(0,T;L^2(\R)}$, instead of $||\p_xQ_{m_N}u_N||_{L^2(0,T;L^2(\R))}$. {Actually}, $||\p_xQ_{m_N}u_N||_{L^2(0,T;L^2(\R))}$ can be {estimated without condition \eqref{eqn-condition on xu_N}}, but the compensated compactness arguments {will not work with only the estimate on $||\p_xQ_{m_N}u_N||_{L^2(0,T;L^2(\R))}$.}
\begin{lemma}\label{lemma-apriori estimate}
	Let $f\in C^1(\mathbb{R})$, and there exists a primitive function $\bar{F}(x)$ of $xf'(x)$, i.e. $\bar{F}'(x)=xf'(x)$. Let $u_0\in L^2(\mathbb{R})$, $T>0$, $Q_{m_N}$ is given in \eqref{eqn-Q} {and} \eqref{eqn-q}, and $u_N:\,[0,T]\times\R\rightarrow\mathcal{R}_N$ {is} the solution of \eqref{eqn-spectral scheme}. {Assume} that
\begin{equation}\label{eqn-condition on xu_N}
	||xu_N||_{L^2(0,T;L^2(\R))}\lesssim N^\t,
\end{equation}
for some $\t>0$. Then
\begin{align}\label{eqn-estimate of D_tQ}
	||\mathcal{D}_xQ_{m_N}u_N||_{L^2(0,T;L^2(\mathbb{R}))}\lesssim\left\{\begin{aligned}
	\frac1{\sqrt{\e_N}},\quad&\textup{if}\ \frac1{\sqrt{\e_N}}\gg N^\t\\
	N^\t,\quad&\textup{if}\ \frac1{\sqrt{\e_N}}\ll N^\t
\end{aligned}\right.,
\end{align}
and
\begin{align}\label{eqn-estimate of u_N}
	{||u_N(\cdot,{T})||}\lesssim\left\{\begin{aligned}
	1,\quad&\textup{if}\ \frac1{\sqrt{\e_N}}\gg N^\t\\
	\sqrt{\e_N}N^\t,\quad&\textup{if}\ \frac1{\sqrt{{\e_N}}}\ll N^\t
\end{aligned}\right.,
\end{align}
where the generic constant contains in $\lesssim$ may depend on $\a,T$ etc., but not $N$.
\end{lemma}
{\bf Proof.}\quad Let us choose $\varphi=u_N\in\mathcal{R}_N$ in \eqref{eqn-spectral scheme} and it yields that
	\begin{align}\label{eqn-apriori0}
		0=&\int_{\mathbb{R}}u_N\partial_tu_Ndx+\int_{\mathbb{R}}\partial_x(P_{N+1}f(u_N))u_Ndx
-\epsilon_N\int_{\mathbb{R}}\p_x\D_x(Q_{m_N}u_N)u_Ndx.
	\end{align}
It is clear that the first term on the right-hand side of \eqref{eqn-apriori0} is $\frac12\frac d{dt}||u_N||^2$ and the second term is zero. In fact, the second term gives
\begin{align}\label{eqn-projection term}
	\int_{\mathbb{R}}\partial_x(P_{N+1}f(u_N))u_Ndx
	=&\int_{\mathbb{R}}P_{N+1}\partial_x(f(u_N))u_Ndx\\\notag
	&-\frac12\sqrt{\lambda_{N+2}}\int_{\mathbb{R}}\left[\widehat{f(u_N)}_{N+1}(t)H_{N+2}^{\alpha}(x)+\widehat{f(u_N)}_{N+2}(t)H_{N+1}^{\alpha}(x)\right]u_Ndx\\\notag
	=&\int_{\mathbb{R}}P_{N+1}\partial_x(f(u_N))u_Ndx
	=\int_{\mathbb{R}}\partial_x(f(u_N))u_Ndx
	=\int_{\R}f'(u_N)u_N\p_xu_Ndx,
\end{align}
where the first equality {in} \eqref{eqn-projection term} follows from the fact that
\begin{equation}\label{eqn-communication of P and derivative}
	 P_N\partial_x\phi(x,t)-\partial_xP_N\phi(x,t)=\frac12\sqrt{\lambda_{N+1}}\left[\hat{\phi}_N(t)H_{N+1}^{\alpha}(x)+\hat\phi_{N+1}(t)H_N^{\alpha}(x)\right],
\end{equation}
{and} the second and third equalities {in} \eqref{eqn-projection term} hold due to the orthogonality of generalized Hermite function. If there exists a primitive function $\bar{F}(x)$ of $xf'(x)$, with the fact that for any $N$, $\lim_{|x|\rightarrow\pm\infty}u_N(x)=0$, we obtain that
\begin{equation}\label{eqn-F=0}
	\left.\int f'(u_N)u_Ndu_N=\bar{F}(u_N(x))\right|_{x=\pm\infty}=0.
\end{equation}

Next, we shall examine the last term on the right-hand side of \eqref{eqn-apriori0}. {By} integration by parts, we have
\begin{align}\label{eqn-L_a fg}\notag
-\e_N\int_{\R}\p_x\D_xQ_{m_N}u_N\,u_Ndx&=\e_N\int_{\R}\D_xQ_{m_N}u_N\,\p_xu_Ndx\\\notag
	&=\e_N\int_{\R}\D_xQ_{m_N}u_N\,\D_xu_Ndx-\e_N\a^2\int_{\R}\D_xQ_{m_N}u_N\,(xu_N)dx\\
&=\I-\e_N\a^2\II.
\end{align}
Let us compute $\I$ and $\II$ term by term:
\begin{align}\label{eqn-I}\notag
	\I \overset{\eqref{eqn-D_xH_n}}=& \e_N\int_{\R}\left(\sum_{k=0}^N\hat{q}_k\hat{u}_k\sqrt{\l_k}H_{k-1}^\a\right)\left(\sum_{m=0}^N\hat{u}_m\sqrt{\l_m}H_{m-1}^\a\right)dx
\overset{\eqref{orthogonal}}=\e_N\sum_{k=0}^N\hat{q}_k\hat{u}_k^2\l_k\\
>&\e_N\sum_{k=0}^N\hat{q}^2_k\hat{u}_k^2\l_k=\e_N\int_{\R}|\D_xQ_{m_N}u_N|^2dx=\e_N||\D_xQ_{m_N}u_N||^2,
\end{align}
since $\hat{q}_k<1$, and $\II$ can be estimated as
\begin{align*}
	\II \leq&\frac1{2\g}||\D_xQ_{m_N}u_N||^2+\frac\g2||xu_N||^2,
\end{align*}
with $\g>\frac{\a^2}2$, by Young's inequality. Therefore, equation \eqref{eqn-apriori0} can be estimated as
\begin{align}\label{eqn-apriori1}
	0\geq\frac12\frac d{dt}||u_N||^2+\e_N\left(1-\frac{\a^2}{2\g}\right)||\D_xQ_{m_N}u_N||^2-\frac{\e_N\a^2\g}2||xu_N||^2.
\end{align}
Integrating {the both sides of \eqref{eqn-apriori1} with respect to $t$} from $0$ to $T$, we get
\begin{equation}\label{eqn-Lemma3.3}
	\e_N\a^2\g||xu_N||^2_{L^2(0,T;L^2(\R))}+||u_0||^2\geq||u_N||^2(T)+2\e_N\left(1-\frac{\a^2}{2\g}\right)||\D_xQ_{m_N}u_N||^2_{L^2(0,T;L^2(\R))}.
\end{equation}
Hence, \eqref{eqn-estimate of D_tQ} and \eqref{eqn-estimate of u_N} {follows} immediately {from \eqref{eqn-Lemma3.3}}.\hfill{$\Box$}

We are now ready to show {the convergence of the spectral scheme \eqref{eqn-spectral scheme} under some mild conditions.}
{\begin{theorem}\label{thm-u}
	Let $f\in C^1(\mathbb{R})$ be a nonlinear function such that $f(0)=0$, and there exists a primitive function $\bar{F}$ of $xf'(x)$, i.e., $\bar{F}'=xf'(x)$. Assume further that $u_0\in L^2(\mathbb{R})$. Let $u_N$ be the solution to the spectral approximation \eqref{eqn-spectral scheme}, which is uniformly bounded, i.e. 
{
\[
||u_N||_\infty<C,
\]
} independent of $N$, and
\begin{equation}\tag{3.10}
	||xu_N||_{L^2(0,T;L^2(\R))}\lesssim N^\t,
\end{equation}
for some $0<\t<\frac14$, holds. Let $N^{-\frac12}\ll\e_N\ll N^{-2\t}$, $m_N\ll N^\b$, with some $0<\b<\t$. Then $\{u_N\}$ converges (strongly in $L_{loc}^p(\Omega)$, $1\leq p<\infty$) to the unique entropy solution of the problem \eqref{eqn-nonlinear conservation}, {denoted as $u(x,t)$,} where $\Omega\in\mathbb{R}\times[0,T]$ is an open and bounded subset.
\end{theorem}}
\noindent{\bf Proof.}\quad The uniform boundedness of $\{u_N\}$ in $L^\infty(\mathbb{R}\times[0,T])$ guarantees that there exists a subsequence {converging} in the weak-* sense of $L^\infty$, {denoted} also $\{u_N\}$ and the limit $u$. We shall prove that $u$ is the unique entropy solution of \eqref{eqn-nonlinear conservation}, and the whole sequence $\{u_N\}$ tends to $u$ in $L_{loc}^p(\Omega)$, $1\leq p<\infty$.

 We first show that $\partial_t u_N+\partial_x f(u_N)$ is in a compact set of $H_{loc}^{-1}(\mathbb{R}\times(0,T))$.
\begin{align}\label{eqn-thm-0}
	\p_t u_N+\p_xf(u_N)=\e_N\p_x\D_xQ_{m_N}u_N+\p_x[(I-P_{N+1})f(u_N)].
\end{align}
Let $K$ be a compact set of $\R\times(0,T)$. It is obvious that the first term on the right-hand side of \eqref{eqn-thm-0} tends to $0$ in $H_{loc}^{-1}(\R\times(0,T))$, since
\begin{align}\label{eqn-DxQ}
	\e_N||\D_xQ_{m_N}u_N||_{L^2(K)}\overset{\eqref{eqn-estimate of D_tQ}}\lesssim \e_N\frac1{\sqrt{\e_N}}\rightarrow0.
\end{align}
According to  Lemma \ref{lemma-orthogonal error}, the second term on the right-hand side of \eqref{eqn-thm-0} can be estimated as
\begin{align}\label{eqn-(I-P)f-0}
	||(I-P_{N+1})f(u_N)||_{L^2(K)}\lesssim N^{-\frac12}||\D_xf(u_N)||_{L^2(0,T;L^2(\R))}{.}
\end{align}
Notice that
\begin{align}\label{eqn-Df}
	||\D_xf(u_N)||\leq||\p_xf(u_N)||+\a^2||xf(u_N)||\leq\sup_{|\xi|\leq||u_N||_\infty}|f'(\xi)|\left(||\p_xu_N||+\a^2||xu_N||\right),
\end{align}
where we use the fact that there exists {$\xi\in\mathbb{R}$}, such that $|\xi|\leq||u_N||_\infty$ and $f(u_N)=f'(\xi)u_N$, if $f(0)=0$ and $f\in C^1(\R)$. By Remark \ref{remark-equivalence of derivatives} and Lemma \ref{lemma-D_x<D_xQ}, we have
\begin{align}\label{eqn-xu_N}
	||\p_xu_N||\leq||\D_xu_N||+\a||u_N||\leq||\D_xQ_{m_N}u_N||+\a(m_N+1)||u_N||.
\end{align}
Thus, back to \eqref{eqn-(I-P)f-0}, we obtain that
\begin{align}\label{eqn-(I-P)f-1}
	||(I-P_{N+1})f(u_N)||_{L^2(K)}\lesssim N^{-\frac12}\left( \frac1{\sqrt{\e_N}}+m_N+N^\t\right)\ll \frac1{\sqrt{\e_NN}}+N^{-\frac12}m_N\rightarrow0,
\end{align}
since $N^{-\frac12}\ll\e_N\ll N^{-2\t}$ and $m_N\ll N^\b$, with $0<\b<\t<\frac14$. Therefore, we conclude that $\partial_t u_N+\partial_x f(u_N)$ is in a compact set of $H_{loc}^{-1}(\mathbb{R}\times(0,T))$.

Let $(U,F)$ be an entropy pair associated to \eqref{eqn-nonlinear conservation}. Next, we shall show that $\partial_t U(u_N)+\partial_x F(u_N)$ is also in a compact subset of $H^{-1}_{loc}(\mathbb{R}\times(0,T))$. {Let us compute directly:}
\begin{align}\label{eqn-U(uN)}\notag
	\p_tU(u_N)+\p_xF(u_N)
=&U'(u_N)(\partial_tu_N+\partial_xf(u_N))\\\notag
=&\e_NU'(u_N)\p_x\D_xQ_{m_N}u_N+U'(u_N)\p_x(I-P_{N+1})f(u_N)\\\notag
=&\e_N\p_x(U'(u_N)\D_xQ_{m_N}u_N)-\e_NU''(u_N)\p_xu_N\D_xQ_{m_N}u_N\\
&+\p_x(U'(u_N)(I-P_{N+1})f(u_N))-U''(u_N)\p_xu_N(I-P_{N+1})f(u_N).
\end{align}
The first and third term on the right-hand side of \eqref{eqn-U(uN)} can be estimated similarly as in \eqref{eqn-DxQ} and \eqref{eqn-(I-P)f-1}. Indeed, {we have}
\begin{align*}
	\e_N||U'(u_N)\D_xQ_{m_N}u_N||_{L^2(K)}\leq \e_N||U'(u_N)||_\infty||\D_xQ_{m_N}u_N||_{L^2(0,T;L^2(\R))}\overset{\eqref{eqn-estimate of D_tQ}}\lesssim \e_N\frac1{\sqrt{\e_N}}\rightarrow0,
\end{align*}
and
\begin{align*}
	||U'(u_N)(I-P_{N+1})f(u_N)||_{L^2(K)}\leq& ||U'(u_N)||_\infty||(I-P_{N+1})f(u_N)||_{L^2(0,T;L^2(\R))}\\
\overset{\eqref{eqn-(I-P)f-1}}\ll&\frac1{\sqrt{\e_NN}}+N^{-\frac12}m_N\rightarrow0{,}
\end{align*}
{where $||U'(u_N)||_\infty<\infty$, since $U\in C^2$ and $||u_N||_\infty<C$.} Therefore, $\e_N\p_x(U'(u_N)\D_xQ_{m_N}u_N)\rightarrow0$ and $\p
_x(U'(u_N)(I-P_{N+1})f(u_N))\rightarrow0$ in $H_{loc}^{-1}(\R\times(0,T))$. The second and fourth term {on} the right-hand side of \eqref{eqn-U(uN)} are estimated {below:}
\begin{align*}
	\e_N||U''(u_N)\p_xu_N&\D_xQ_{m_N}u_N||_{L^1(K)}\\
\leq&\e_N||U''(u_N)||_\infty||\p_xu_N||_{L^2(0,T;L^2(\R))}||\D_xQ_{m_N}u_N||_{L^2(0,T;L^2(\R))}\\
\overset{\eqref{eqn-xu_N}}\lesssim&\e_N||\D_xQ_{m_N}u_N||_{L^2(0,T;L^2(\R))}\left(||\D_xQ_{m_N}u_N||_{L^2(0,T;L^2(\R))}+m_N||u_N||_{L^2(0,T;L^2(\R))}\right)\\
\overset{\eqref{eqn-estimate of D_tQ},\eqref{eqn-estimate of u_N}}\lesssim& \e_N\frac1{\sqrt{\e_N}}\left(\frac1{\sqrt{\e_N}}+m_N\right)=\mathcal{O}(1),
\end{align*}
since $\sqrt{\e_N}m_N\ll N^{-\t+\b}\rightarrow0$, as $N\rightarrow\infty${, and $||U''(u_N)||_\infty<\infty$ ($U\in C^2$ and $||u_N||_\infty<C$)}{, and}
\begin{align}\label{eqn-fourth term}\notag
	||U''(u_N)&\p_xu_N(I-P_{N+1})f(u_N)||_{L^1(K)}\\\notag
\leq&||U''(u_N)||_\infty||\p_xu_N||_{L^2(0,T;L^2(\R))}||(I-P_{N+1})f(u_N)||_{L^2(0,T;L^2(\R))}\\
\overset{\eqref{eqn-xu_N},\eqref{eqn-(I-P)f-1}}\ll& \left(\frac1{\sqrt{\e_N}}+m_N\right)N^{-\frac12}\left(\frac1{\sqrt{\e_N}}+m_N\right)
\leq \frac1{\e_N\sqrt{N}}+\frac{m_N^2}{\sqrt{N}}\rightarrow0,
\end{align}
since {$||U''(u_N)||_\infty<\infty${,}} $\frac1{\e_N\sqrt{N}}\ll \frac1{N^{-\frac12}\sqrt{N}}=1$ and $\frac{m_N^2}{\sqrt{N}}\ll N^{2\b-\frac12}\rightarrow0$, with the assumption that $\b<\t<\frac14$.

Thus the entropy production $\partial_tU(u_N)+\partial_xF(u_N)$ can be written as a sum of four terms, two are bounded in $L^1(\Omega)$  and the other two tend to $0$ in $H^{-1}_{loc}(\Omega)$. Besides, $\partial_tU(u_N)+\partial_xF(u_N)$ is in $W^{-1,p}_{loc}(\mathbb{R}\times(0,T))$ for any $p>2$, since $U$ and $F$ are continuous and $u_N$ is uniformly bounded in $L^\infty(\mathbb{R}\times(0,T))$. Therefore, in view of the Murat's lemma \cite{Ch}, $\partial_tU(u_N)+\partial_xF(u_N)$ is in a compact subset of $H^{-1}_{loc}(\mathbb{R}\times(0,T))$.

We conclude that the entropy production of \eqref{eqn-spectral scheme} is $H^{-1}-$compact, by compensated compactness arguments \cite{T}{. It} implies that $u_N$ converges strongly in $L_{loc}^p(\Omega)$, $1\leq p<\infty$ to a weak solution of the conservation law \eqref{eqn-nonlinear conservation}. {Let us denote this weak solution $u$.}

It remains to show that $u$ is indeed the weak solution of \eqref{eqn-nonlinear conservation} satisfying the entropy condition. Let us multiply a nonnegative test function $\phi\in C_0^1(\R\times(0,T))$ on both sides of \eqref{eqn-U(uN)} and integrate it with respect to both $t$ and $x$:
\begin{align}\label{eqn-entropy}\notag
	\int_0^T\int_{\R}&\left[\p_tU(u_N)+\p_xF(u_N)\right]\phi dxdt\\\notag
\overset{\eqref{eqn-U(uN)}}=&-\e_N\int_0^T\int_\R U'(u_N)\D_xQ_{m_N}u_N\p_x\phi dxdt
-\e_N\int_0^T\int_RU''(u_N)\p_xu_N\D_xQ_{m_N}u_N\phi dxdt\\
&-\int_0^T\int_\R U'(u_N)(I-P_{N+1})f(u_N)\p_x\phi dxdt-\int_0^T\int_\R U''(u_N)\p_xu_N(I-P_{N+1})f(u_N)\phi dxdt.
\end{align}
The first, third and fourth term on the right-hand side of \eqref{eqn-entropy} tend to $0$, as $N\rightarrow\infty$. {It is because that}
\begin{align}\label{eqn-thm-1}\notag
	&\e_N\left|\int_0^T\int_\R U'(u_N)\D_xQ_{m_N}u_N\p_x\phi dxdt\right|\\
&\qquad\leq \e_N||U'(u_N)||_\infty||\D_xQ_{m_N}u_N||_{L^2(0,T;L^2(\R))}||\p_x\phi||_{L^2(0,T;L^2(\R))}
\lesssim \e_N\frac1{\sqrt{\e_N}}\rightarrow0{,}
\end{align}
\begin{align}\label{eqn-thm-3}\notag
	&\left|\int_0^T\int_\R U'(u_N)(I-P_{N+1})f(u_N)\p_x\phi dxdt\right|\\
&\quad\leq||U'(u_N)||_\infty||(I-P_{N+1})f(u_N)||_{L^2(0,T;L^2(\R))}||\p_x\phi||_{L^2(0,T;L^2(\R))}\overset{\eqref{eqn-(I-P)f-1}}\ll \frac1{\sqrt{\e_NN}}+N^{-\frac12}m_N\rightarrow0{,}
\end{align}
and
\begin{align}\label{eqn-thm-4}\notag
	&\left|\int_0^T\int_\R U''(u_N)\p_xu_N(I-P_{N+1})f(u_N)\phi dxdt\right|\\\notag
&\qquad\leq||U''(u_N)||_\infty||\p_xu_N||_{L^2(0,T;L^2(\R))}||(I-P_{N+1})f(u_N)||_{L^2(0,T;L^2(\R))}||\phi||_\infty\\
&\qquad\overset{\eqref{eqn-fourth term}}\lesssim\frac1{\e_N\sqrt{N}}+\frac{m_N^2}{\sqrt{N}}\rightarrow0.
\end{align}
{The} third term on the right-hand side of \eqref{eqn-entropy} {is analyzed below}. Notice that $Q_{m_N}=I-R_{m_N}$, then
\begin{align}\label{eqn-third term}\notag
	-\e_N&\int_0^T\int_RU''(u_N)\p_xu_N\D_xQ_{m_N}u_N\phi dxdt\\\notag
=&-\e_N\int_0^T\int_\R U''(u_N)(\D_xu_N)^2\phi dxdt+\e_N\int_0^T\int_RU''(u_N)\D_xu_N\D_xR_{m_N}u_N\phi dxdt\\
&+\e_N\a^2\int_0^T\int_R U''(u_N)xu_N\D_xQ_{m_N}u_N\phi dxdt.
\end{align}
The second and third term on the right-hand side of \eqref{eqn-third term} tend to $0$, as $N\rightarrow\infty$. In fact, {it is clear to see that}
\begin{align*}
	\e_N&\left|\int_0^T\int_RU''(u_N)\D_xu_N\D_xR_{m_N}u_N\phi dxdt\right|\\
&\qquad\leq\e_N||U''(u_N)||_\infty||\D_xu_N||_{L^2(0,T;L^2(\R))}||\D_xR_{m_N}u_N||_{L^2(0,T;L^2(\R))}||\phi||_\infty\\
&\qquad\overset{\eqref{eqn-D_x<D_xQ},\eqref{eqn-estimate derivative of R phi}}\lesssim \e_N\left(\frac1{\sqrt{\e_N}}+m_N\right)m_N\rightarrow0,
\end{align*}
and
\begin{align*}
	\e_N&\a^2\left|\int_0^T\int_R U''(u_N)xu_N\D_xQ_{m_N}u_N\phi dxdt\right|\\
\leq&\e_N\a^2||U''(u_N)||_\infty||xu_N||_{L^2(0,T;L^2(\R))}||\D_xQ_{m_N}u_N||_{L^2(0,T;L^2(\R))}||\phi||_\infty
\overset{\eqref{eqn-condition on xu_N},\eqref{eqn-estimate of D_tQ}}\lesssim \e_NN^\t\frac1{\sqrt{\e_N}}\rightarrow0.
\end{align*}
{Due} to the convexity of $U$, the first term on the right-hand side of \eqref{eqn-third term} is nonpositive. Therefore, as $N\rightarrow\infty$, the second term on the right-hand side of \eqref{eqn-entropy} is nonpositive. Combining \eqref{eqn-thm-1}-\eqref{eqn-thm-4}, we conclude that for any nonnegative test function $\phi\in C_0^1(\R\times(0,T))$,
\[
	\lim_{N\rightarrow\infty}\int_0^T\int_\R\left[\p_tU(u_N)+\p_xF(u_N)\right]\phi dxdt\leq0.
\]
{This reveals that} the entropy condition \eqref{eqn-entropy condition} has been satisfied in the weak sense.\hfill{$\Box$}
\begin{remark}
	The conditions on $\e_N$ and $m_N$ {in Theorem \ref{thm-u}} are almost the same as those in \cite{AR}. The difference is that we replace the condition $||xu_N||_\infty<C$, by some growth condition on $||xu_N||_{L^2(0,T;L^2(\R))}$ (\eqref{eqn-condition on xu_N} with $\t<\frac14$). It is hard to tell which condition is more restrictive.
\end{remark}

\subsection{With viscosity term $\e\L_\a u$}
In {this} subsection, we introduce another viscosity term $\e\L_\a u$. Unlike the viscosity operator $Q_{m_N}$ only modified the high frequency {modes}, this viscosity includes all. The spectral {scheme with this viscosity} is introduced in \eqref{eqn-spectral scheme-v}. Let us start with the apriori  estimates on {the approximate solution} $v_N$.
\begin{lemma}\label{lemma-apriori estimate-v}
	Let $f\in C^1(\mathbb{R})$, and there exists a primitive function $\bar{F}(x)$ of $xf'(x)$, i.e. $\bar{F}'(x)=xf'(x)$. Let $u_0\in L^2(\mathbb{R})$, $T>0$  and $v_N:\,[0,T]\times\R\rightarrow\mathcal{R}_N$ the solution of \eqref{eqn-spectral scheme-v}. Then
\begin{align}\label{eqn-estimate of D_xv}
	||\mathcal{D}_xv_N||_{L^2(0,T;L^2(\mathbb{R}))}\lesssim\frac1{\sqrt{\e_N}},
\end{align}
and
\begin{align}\label{eqn-estimate of v_N}
	{||v_N(\cdot,{T})||}\leq ||u_0||.
\end{align}
\end{lemma}
{\bf Proof.}\quad We multiply \eqref{eqn-spectral scheme-v} by $\varphi=v_N\in\mathcal{R}_N$ and integrate {it} with respect to $x$:
\begin{equation}\label{eqn-energy estimate-v}
	0=\int_\R(\p_tv_N)v_Ndx+\int_\R\p_x(f(v_N))v_Ndx+\e_N\int_\R(\L_\a v_N)v_Ndx=\frac12\frac{d}{dt}||v_N||^2+\e_N||\D_xv_N||^2,
\end{equation}
where the second term {in the middle of \eqref{eqn-energy estimate-v} vanishes due to the same reason in \eqref{eqn-F=0}, and the second term on the right-hand side of \eqref{eqn-energy estimate-v}} is followed from the fact that
\begin{align*}
	\int_\R(\L_\a\psi)\phi dx\overset{{\eqref{S-L}}}=&-\int_\R e^{\frac12\alpha^2x^2}{\frac d{dx}}\left(e^{-\alpha^2x^2}{\frac d{dx}}\left(e^{\frac12\alpha^2x^2}\psi\right)\right)\phi dx
=-\int_\R\left(\left({\frac d{dx}}-\a^2 x\right)\D_x\psi\right)\phi dx\\
=&\int_\R\D_x\psi\D_x\phi dx.
\end{align*}
Integrating on both sides of \eqref{eqn-energy estimate-v} from $0$ to $T$, we obtain that
\[
	||u_0||^2{=}||v_N||^2(T)+2\e_N||\D_xv_N||^2_{L^2(0,T;L^2(\R))}.
\]
Equation \eqref{eqn-estimate of D_xv} and \eqref{eqn-estimate of v_N} {are obtained} immediately.\hfill{$\Box$}

{We are now in the position to show the convergence of the scheme \eqref{eqn-spectral scheme-v}.} {The proof of the convergence of the scheme \eqref{eqn-spectral scheme-v} is similar to that of Theorem \ref{thm-u}. The differences are the delicate estimates, like those in \eqref{eqn-DxQ}-\eqref{eqn-Df}, \eqref{eqn-fourth term}-\eqref{eqn-third term}, etc.}
{
\begin{theorem}\label{thm-v}
	Let $f\in C^1(\mathbb{R})$ be a nonlinear function such that $f(0)=0$, and there exists a primitive function $\bar{F}$ of $xf'(x)$. Assume further that $u_0\in L^2(\mathbb{R})$. Let $v_N$ be the solution to the spectral approximation \eqref{eqn-spectral scheme-v}, which is uniformly bounded{, i.e.
\[
	||v_N||_\infty<C,
\]
independent of $N$, and} assume that 
\begin{equation}\label{cond-x2v_N}
	||x^2v_N||_{L^1(\R\times(0,T))}\ll\frac1{\e_N}.
\end{equation}
Let $\frac1{\e_N\sqrt{N}}\rightarrow0$. Then $\{v_N\}$ converges strongly in $L_{loc}^p(\Omega)$, $1\leq p<\infty$ to the unique entropy solution of the problem \eqref{eqn-nonlinear conservation}, where $\Omega\in\mathbb{R}\times[0,T]$ is an open and bounded subset.
\end{theorem}}
\noindent{\bf Proof.}\quad The uniform boundedness of $\{v_N\}$ in $L^\infty(\mathbb{R}\times[0,T])$ guarantees that there exists a subsequence {converging} in the weak-* sense of $L^\infty$, {denoted also as} $\{v_N\}$ and the limit $u$. We shall prove that $u$ is the unique entropy solution of \eqref{eqn-nonlinear conservation}, and the whole sequence $\{v_N\}$ tends to $u$ in $L_{loc}^p(\Omega)$, $1\leq p<\infty$.

 We first show that $\partial_t u_N+\partial_x f(u_N)$ is in a compact set of $H_{loc}^{-1}(\mathbb{R}\times(0,T))$. {Let us compute directly:}
\begin{align}\label{eqn-thm-v-0}\notag
	\p_t v_N+\p_xf(v_N)=&-\e_N\L_\a v_N+\p_x[(I-P_{N+1})f(v_N)]\\\notag
=&\e_N\p_x\D_xv_N-\e_N\a^2\p_x(xv_N)+\e_N\a^2v_N-\e_N\a^4x^2v_N+\p_x[(I-P_{N+1})f(v_N)]\\
=&\e_N\p_x^2v_N+\e_N\a^2v_N-\e_N\a^4x^2v_N+\p_x[(I-P_{N+1})f(v_N)].
\end{align}
Let $K\subset\R\times(0,T)$ be a compact set. Notice that
\begin{align}\label{eqn-thm-v-pv}
	\e_N||\p_xv_N||_{L^2(K)}
\overset{\eqref{eqn-equivalence of d_x and D_x}}\leq&\e_N(||\D_xv_N||_{L^2(0,T;L^2(\R))}+\a||v_N||_{L^2(0,T;L^2(\R))})
\overset{\eqref{eqn-estimate of D_xv},\eqref{eqn-estimate of v_N}}\lesssim\e_N\frac1{\sqrt{\e_N}}\rightarrow0{,}\\\label{eqn-thm-v-vN}
	\e_N||v_N||_{L^2(K)}\lesssim& \e_N\rightarrow0{,}\\\label{eqn-thm-v-I-PN}\notag
	||(I-P_{N+1})f(v_N)||_{L^2(K)}\lesssim& N^{-\frac12}||\D_xf(v_N)||_{L^2(0,T;L^2(\R))}\\\notag
\overset{\eqref{eqn-Df}}\lesssim&N^{-\frac12}\left(||\p_xv_N||_{L^2(0,T;L^2(\R))}+\a^2||xv_N||_{L^2(0,T;L^2(\R))}\right)\\
\overset{\eqref{eqn-equivalence of d_x and D_x},\eqref{eqn-equivalence of xphi}}\lesssim& N^{-\frac12}\frac1{\sqrt{\e_N}}\rightarrow0{,}
\end{align}
and
\begin{align}\label{eqn-thm-v-x2vN}
	\e_N||x^2v_N||_{L^1(K)}\overset{{\eqref{cond-x2v_N}}}\ll\e_N\frac1{\e_N}=1.
\end{align}
Thus, $\partial_tv_N+\partial_xf(v_N)$ can be written as a sum of four terms, two tend to $0$ in $H^{-1}_{loc}(\R)$, one is bounded in $L^1_{loc}(\R)$  and the other one tends to $0$ in $L^2_{loc}(\R)$ . Besides, $\partial_tv_N+\partial_xf(v_N)$ is in $W^{-1,p}_{loc}(\mathbb{R}\times(0,T))$ for any $p>2$, since $f\in C^2$ and {$v_N$} is uniformly bounded in $L^\infty(\mathbb{R}\times(0,T))$. Therefore, in view of the Murat's lemma \cite{Ch}, $\partial_tv_N+\partial_xf(v_N)$ is in a compact subset of $H^{-1}_{loc}(\mathbb{R}\times(0,T))$.

Next, we show that $\p_tU(v_N)+\p_xF(v_N)$ is also in a compact subset of $H^{-1}_{loc}(\R\times(0,T))$, where $(U,F)$ is the entropy pair introduced in \eqref{eqn-entropy condition}.
\begin{align}\label{eqn-thm-v-U}\notag
	\p_tU(v_N)+\p_xF(v_N)=&-\e_NU'(v_N)\L_\a v_N+U'(v_N)\p_x(I-P_{N+1})f(v_N)\\\notag
	\overset{\eqref{eqn-thm-v-0}}=&\e_N\p_x(U'(v_N)\p_xv_N)-\e_NU''(v_N)(\p_xv_N)^2\\\notag
	&+\e_NU'(v_N)\a^2v_N-\e_NU'(v_N)\a^4x^2v_N\\
	&+\p_x(U'(v_N)\p_x(I-P_{N+1})f(v_N))-U''(v_N)\p_xv_N(I-P_{N+1})f(v_N).
\end{align}
Notice the estimates in \eqref{eqn-thm-v-pv}-\eqref{eqn-thm-v-x2vN} and the fact that $U\in C^2$, $||v_N||_\infty<C$, the first, third, fourth and fifth term on the right-hand side of \eqref{eqn-thm-v-U} can be {dealt with} similarly as before, i.e. the first and fifth term tend to $0$ in $H^{-1}_{loc}(\R\times(0,T))${,} the third term tends to $0$ in $L^2_{loc}(\R\times(0,T))${,} and the fourth term is bounded in $L^1_{loc}(\R\times(0,T))$. The two {remaining} terms on the right-hand side of \eqref{eqn-thm-v-U} are both bounded in $L^1_{loc}(\R\times(0,T))$. In fact, {we have}
\begin{align*}
	\e_N||U''(v_N)(\p_xv_N)^2||_{L^1(K)}\leq\e_N||U''(v_N)||_\infty||\p_xv_N||_{L^2(0,T;L^2(\R))}^2\lesssim \e_N\left(\frac1{\sqrt{\e_N}}\right)^2=1,
\end{align*}
and
\begin{align}\label{eqn-thm-v-U''pv}\notag
	||U''(v_N)&\p_xv_N(I-P_{N+1})f(v_N)||_{L^1(K)}\\\notag
\leq&||U''(v_N)||_\infty||\p_xv_N||_{L^2(0,T;L^2(\R))}||(I-P_{N+1})f(v_N)||_{L^2(0,T;L^2(\R))}\\
\overset{\eqref{eqn-thm-v-pv},\eqref{eqn-thm-v-I-PN}}\lesssim&\frac1{\sqrt{\e_N}}N^{-\frac12}\frac1{\sqrt{\e_N}}=\frac1{\e_N\sqrt{N}}\rightarrow0.
\end{align}
Therefore, as we argued before, in view of the Murat's lemma \cite{Ch}, $\partial_tU(v_N)+\partial_xF(v_N)$ is in a compact subset of $H^{-1}_{loc}(\mathbb{R}\times(0,T))$. We conclude that $v_N$ converges strongly in $L_{loc}^p(\R\times(0,T))$, $1\leq p<\infty$ to a weak solution of the conservation law \eqref{eqn-nonlinear conservation}. {Let us denote this solution as $u$.}

It remains to show that the entropy condition \eqref{eqn-entropy condition} is satisfied {by $u$} in the weak sense. Let us multiply a nonnegative test function $\phi\in C_0^1(\R\times(0,T))$ on both sides of \eqref{eqn-thm-v-U} and integrate it with respect to both $t$ and $x$:
\begin{align}\label{eqn-thm-v-entropy}\notag
	\int_0^T\int_\R&[\p_tU(v_N)+\p_xF(v_N)]\phi dxdt\\\notag
\overset{\eqref{eqn-thm-v-U}}=&-\e_N\int_0^T\int_\R U'(v_N)\p_xv_N\p_x\phi dxdt-\e_N\int_0^T\int_\R U''(v_N)(\p_xv_N)^2\phi dxdt\\\notag
&+\e_N\int_0^T\int_\R U'(v_N)\a^2v_N\phi dxdt-\e_N\int_0^T\int_\R U'(v_N)\a^4x^2v_N\phi dxdt\\
	&-\int_0^T\int_\R U'(v_N)\p_x(I-P_{N+1})f(v_N)\p_x\phi dxdt-\int_0^T\int_\R U''(v_N)\p_xv_N(I-P_{N+1})f(v_N)\phi dxdt.
\end{align}
{The estimates} \eqref{eqn-thm-v-pv}-\eqref{eqn-thm-v-x2vN} and \eqref{eqn-thm-v-U''pv} imply that all the terms except the second one on the right-hand side of \eqref{eqn-thm-v-entropy} tends to $0$, as $N\rightarrow\infty$. {It is hard to tell that} the second term is nonpositive, due to the convexity of $U$. Therefore, the entropy condition is satisfied in the weak sense, i.e. for any nonnegative test function $\phi\in C_0^1(\R\times(0,T))$, {we have} 
\[
	\lim_{N\rightarrow\infty}\int_0^T\int_\R[\p_tU(v_N)+\p_xF(v_N)]\phi dxdt\leq0.
\]
\hfill{$\Box$}
\begin{remark}
{Compared} the viscosity term in \eqref{eqn-spectral scheme} {with} \eqref{eqn-spectral scheme-v}, the convergence analysis for \eqref{eqn-spectral scheme-v} is easier, but the price to pay is {that} condition \eqref{cond-x2v_N} is stronger than \eqref{eqn-condition on xu_N}, since 
\[
	||xv_N||_{L^2(\R\times(0,T))}^2\leq||v_N||_\infty||x^2v_N||_{L^1(\R\times(0,T))}\ll\frac1{\e_N}{\ll N^{\frac12},}
\]
{ where $\frac1{\epsilon_N\sqrt{N}}\rightarrow0$.}
\end{remark}
\section{Numerical experiments}
\setcounter{equation}{0}

In this section, we use the spectral viscosity methods \eqref{eqn-spectral scheme} {and} \eqref{eqn-spectral scheme-v} to numerically solve the inviscid Burger's equation
\begin{equation}\label{eqn-Burger}
	\partial_tu+\frac12\partial_x(u^2)=0,
\end{equation}
in $\mathbb{R}$, with the initial condition ${u_0(x)}=e^{-x^2}$. We {shall solve} the same problem in \cite{AR} for the purpose of comparison. The exact solution is given implicitly by the method of characteristics, i.e.,
\begin{equation}\label{eqn-exact solution to BE}
	u(\eta+te^{-\eta^2},t)=e^{-\eta^2},
\end{equation}
with the initial condition $u(\eta,0)=u_0$. {The} shock presents at time $T^*=\left(\frac e2\right)^{\frac12}\approx1.1658$. All of the numerical results displayed below are at time $t=1.5>T^*$.

{In the sequel, in} both spectral schemes  \eqref{eqn-spectral scheme} {and} \eqref{eqn-spectral scheme-v}, we let $\varphi=H_m^\alpha(x)$, $m=0,1,\cdots,N$. The coefficients  {$\hat{u}_m(t)$ and $\hat{v}_m(t)$}, $m=0,\ldots,N$, are the solutions {to the corresponding} system of {nonlinear} ordinary differential equation. It is solved by using the fourth order Runge-Kutta method with adaptive time {steps} ({\it ode45} in Matlab).

The viscosity {operator  $Q_{m_N}$} in scheme \eqref{eqn-spectral scheme} { is defined by $\hat{q}_k$. We shall} try the following multipliers in \cite{AR}:
	{
\begin{align}\label{eqn-qk}\notag
	\hat{q}^1_k=&\frac N{N-m_N}\left(1-\frac{m_N}k\right),\\
	\hat{q}^2_k=&\frac{k-m_N}{N-m_N},\\\notag
	\hat{q}^3_k=&\exp{\left\{-\left(\frac{k-N}{k-m_N}\right)^2\right\}},
\end{align}}
for $k>m_N$. It is easy to check that condition \eqref{eqn-q} are satisfied {by $\hat{q}_k^1$. It is suggested in \cite{MT} that $\hat{q}_k^2$ and $\hat{q}_k^3$ may yield better resolution of the shock. However, the lower bound in \eqref{eqn-q} does not hold.}

{\subsection{The choice of scaling factor $\a$}}

{ In our numerical simulations, we introduce the generalized Hermite functions $H_k^\a$ \eqref{new hermite} with one more parameter $\a$ to tuning with.}  The optimal choice of the scaling factor to accurately resolve the functions is still open, let alone the solution to some partial differential equations. But the suitable choice of the scaling factor {to resolve certain kind of analytic/smooth functions} is investigated in \cite{Tang}, \cite{B}, \cite{B1}, \cite{LY}, etc. It is known so far that the scaling factor should match the asymptotical behavior of the function to be resolved. {The author and her co-worker provide a practical guideline to choose the suitable scaling factor \cite{LY} for Gaussian and super-Gaussian functions. The time-dependent scaling factor of the Hermite spectral method in solving evolution equations has also been investigated in \cite{LYY}. However, all the guidelines can not be applied in our case, due to the discontinuity.

The approximate solution $u_N$ of scheme \eqref{eqn-spectral scheme} with $\epsilon_N=0$, $N=30$ at time $T=1.5$ are plotted in Figure \ref{fig-withoutviscosity_differentalpha} with $\alpha$ varying from $0.5$, $1$, $\sqrt{2}$ and $3$. It reveals that the larger $\alpha$ gives better resolution of the discontinuity, but more oscillations. It is clearly shown in Figure \ref{fig-withoutviscosity_differentalpha} ($\a=3$) that without the help of viscosity the approximate solution does not converge to the entropy solution. Compared with Figure 6.1 in \cite{AR}, our scheme with $N=30$ can resolve the solution as good as the scheme in \cite{AR} with $N=257$. In Figure \ref{fig-withviscosity_q1_differentalpha}-\ref{fig-withviscosity_q3_differentalpha}, we experiment our scheme \eqref{eqn-spectral scheme} with $\hat{q}_k^1-\hat{q}_k^3$ in \eqref{eqn-qk}, $\epsilon_N=0.5N^{-0.33}$, $N=30$ and $\a=0.5$, $1$, $\sqrt{2}$, $2$.  They all show the similar phenomenon as that without viscosity that the larger $\a$ is, the better resolution at the discontinuity we obtain, the more oscillations the approximate solution presents. Obviously, with the same $N$, properly tuning  the scaling factor $\a$ can help the resolution of the discontinuity. It is not hard to see that from the definition of the generalized Hermite function \eqref{new hermite}, the larger $\a$ is, the more concentrated the generalized Hermite functions present. This is the possible reason why the larger $\a$ can resolve the discontinuity better.  Compared Figure \ref{fig-withoutviscosity_differentalpha}-\ref{fig-withviscosity_q3_differentalpha} with Figure \ref{fig-withoutviscosity}-\ref{fig-withviscosity_q3}, the properly choice of the scaling factor $\a$ can reduce $N$ significantly, so does the computational cost.  Figure \ref{fig-withviscosity_differentalpha} displays the approximate solution obtained by scheme \eqref{eqn-spectral scheme-v} with $\e_N=0.05N^{-0.33}$, $N=30$ and $\a=0.5,1,\sqrt{2},2$. Not like the phenomenon in Figure \ref{fig-withoutviscosity_differentalpha}-\ref{fig-withviscosity_q3_differentalpha}, Figure \ref{fig-withviscosity_differentalpha} shows that in our scheme \eqref{eqn-spectral scheme-v} large $\a$ (say $\a=2$) tends to smoothing out everything, including the discontinuity. It seems that even the energy has been dissipated due to the excessive viscosity term in Figure \ref{fig-withviscosity_differentalpha} ($\a=2$). From this numerical experiment, we believe that, besides the concentration, the larger $\a$ also introduces more viscosity. Therefore, the balance of concentration and dissipation should be reached to obtain the ideal resolution. One may wonder why the smoothing-out effect of large $\a$ in Figure  \ref{fig-withoutviscosity_differentalpha}-\ref{fig-withviscosity_q3_differentalpha}($\a=2$) is not as obvious as that in Figure \ref{fig-withviscosity_differentalpha}($\a=2$). Notice that the major difference of scheme \eqref{eqn-spectral scheme-v} and \eqref{eqn-spectral scheme} is that one modifies all modes of the Fourier-Hermite expansion, while the other one only modifies the high modes. Therefore, we believe it is the excessive modifications of the low modes that causes the over-smoothing in Figure \ref{fig-withviscosity_differentalpha}, but not in Figure \ref{fig-withoutviscosity_differentalpha}-\ref{fig-withviscosity_q3_differentalpha}. How to choose optimal scaling factor $\a$ is still open.}

\begin{figure}[!h]
    \includegraphics[trim = 20mm 95mm 0mm 95mm, clip,scale=0.9]{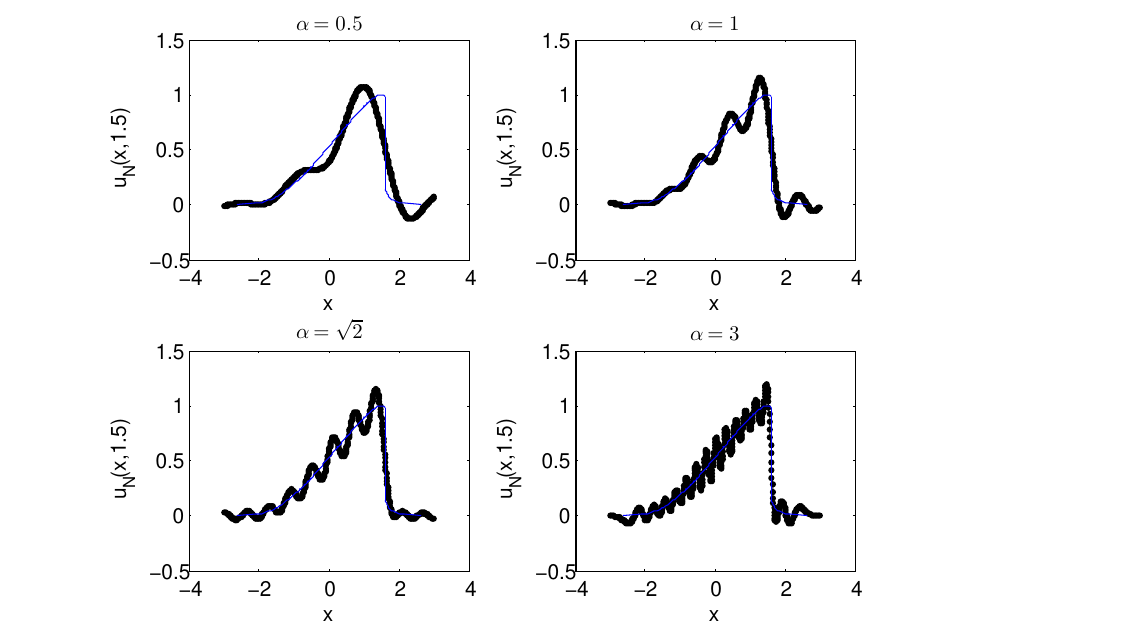}
		\caption{Solid blue line: the exact solution of Burger's equation; Dotted line: the scheme \eqref{eqn-spectral scheme} with $\epsilon_N=0$ (without viscosity) and $N=30$.}
\label{fig-withoutviscosity_differentalpha}
 \end{figure}

\begin{figure}[!h]
    \includegraphics[trim = 20mm 95mm 0mm 95mm, clip,scale=0.9]{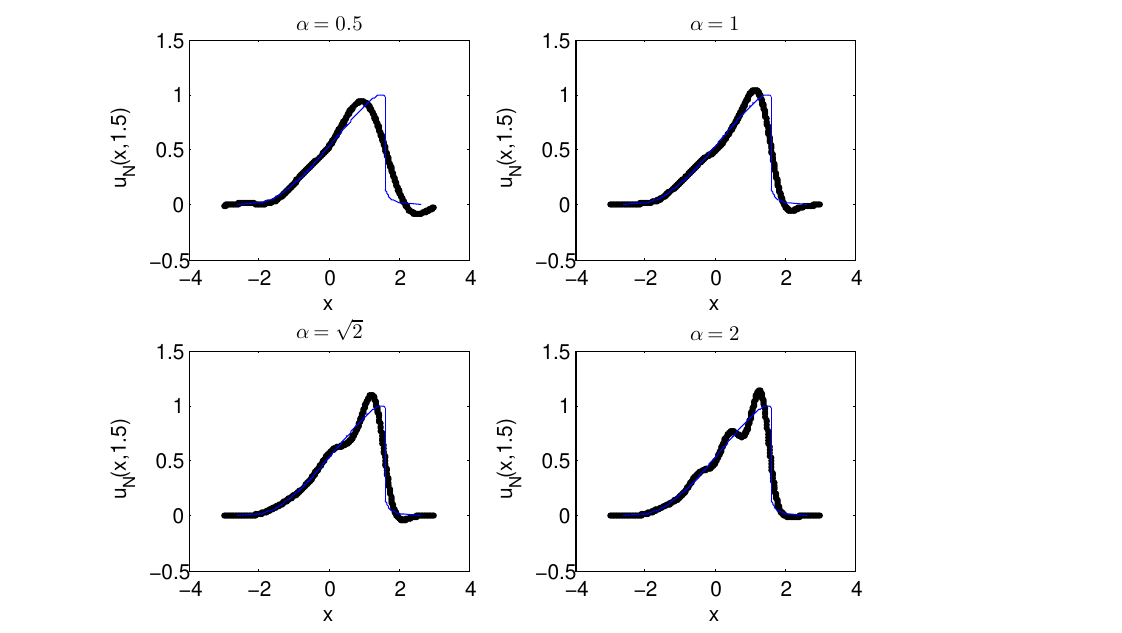}
		\caption{Solid blue line: the exact solution of Burger's equation; dotted line: the scheme \eqref{eqn-spectral scheme} with $\epsilon_N=0.5N^{-0.33}$ and $N=30$.}
\label{fig-withviscosity_q1_differentalpha}
 \end{figure}

\begin{figure}[!h]
    \includegraphics[trim = 20mm 95mm 0mm 95mm, clip,scale=0.9]{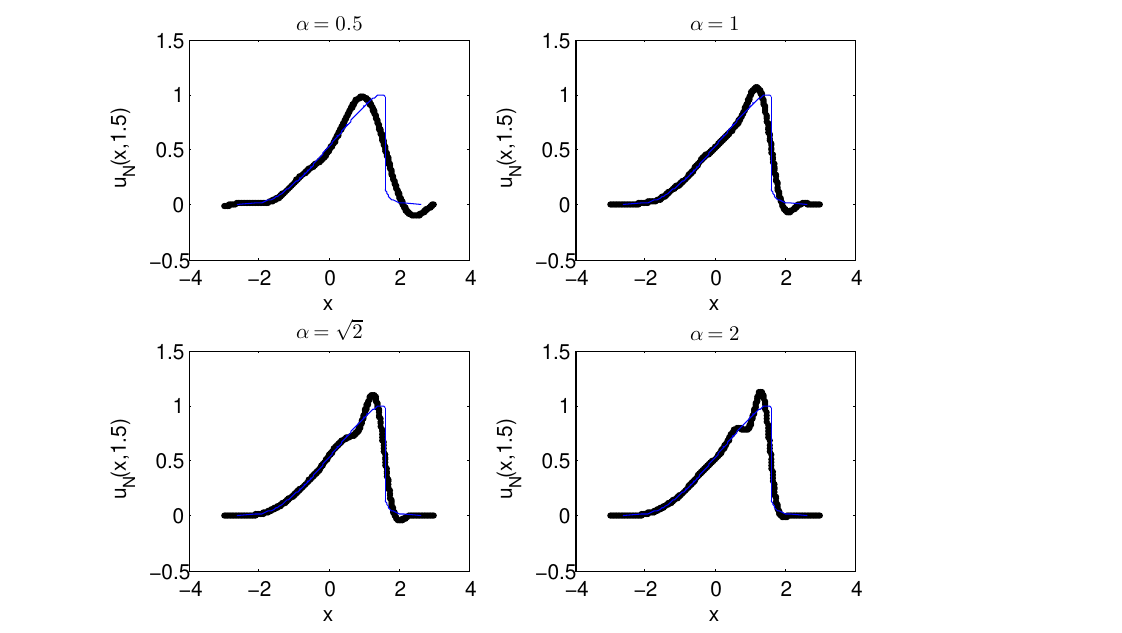}
		\caption{Solid blue line: the exact solution of Burger's equation; dotted line: the scheme \eqref{eqn-spectral scheme} with $\epsilon_N=0.5N^{-0.33}$ and $N=30$.}
\label{fig-withviscosity_q2_differentalpha}
 \end{figure}

\begin{figure}[!h]
    \includegraphics[trim = 20mm 95mm 0mm 95mm, clip,scale=0.9]{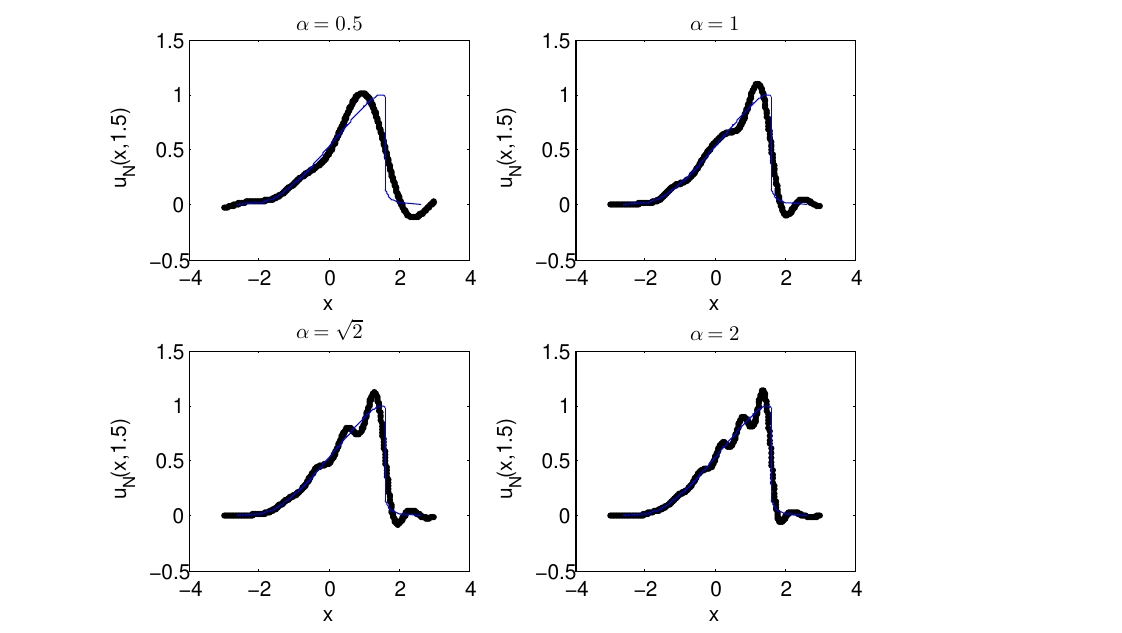}
		\caption{Solid blue line: the exact solution of Burger's equation; dotted line: the scheme \eqref{eqn-spectral scheme} with $\epsilon_N=0.5N^{-0.33}$ and $N=30$.}
\label{fig-withviscosity_q3_differentalpha}
 \end{figure}

\begin{figure}[!h]
    \includegraphics[trim = 20mm 95mm 0mm 95mm, clip,scale=0.9]{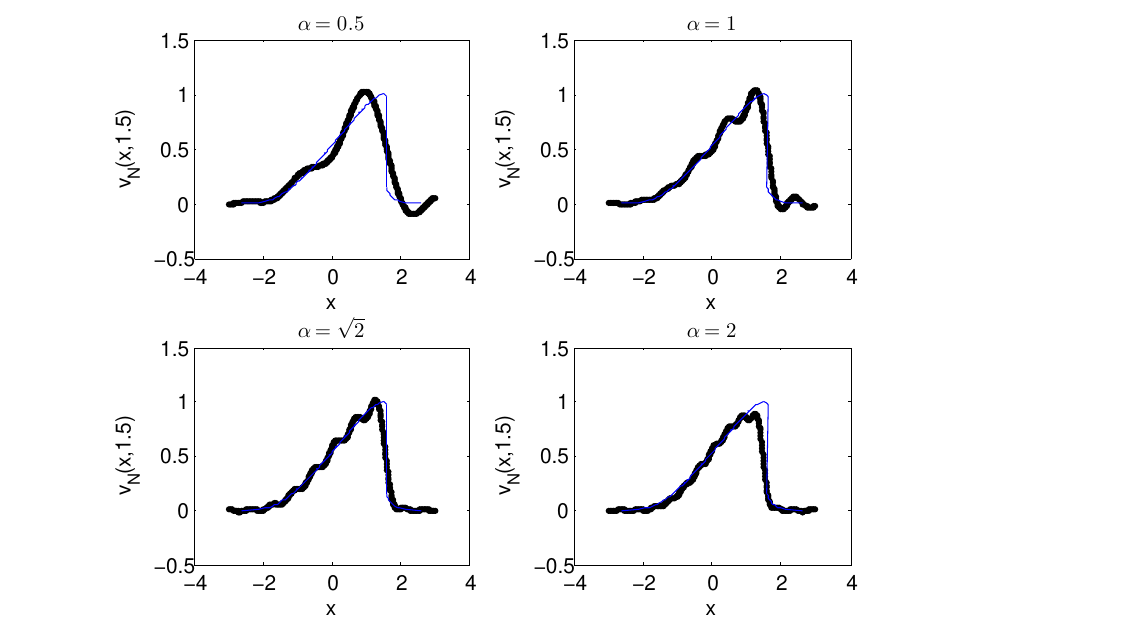}
		\caption{Solid blue line: the exact solution of Burger's equation; dotted line: the scheme \eqref{eqn-spectral scheme-v} with $\epsilon_N=0.05N^{-0.33}$ and $N=30$.}
\label{fig-withviscosity_differentalpha}
 \end{figure}

\begin{figure}[!h]
    \includegraphics[trim = 8mm 110mm 0mm 110mm, clip,scale=0.75]{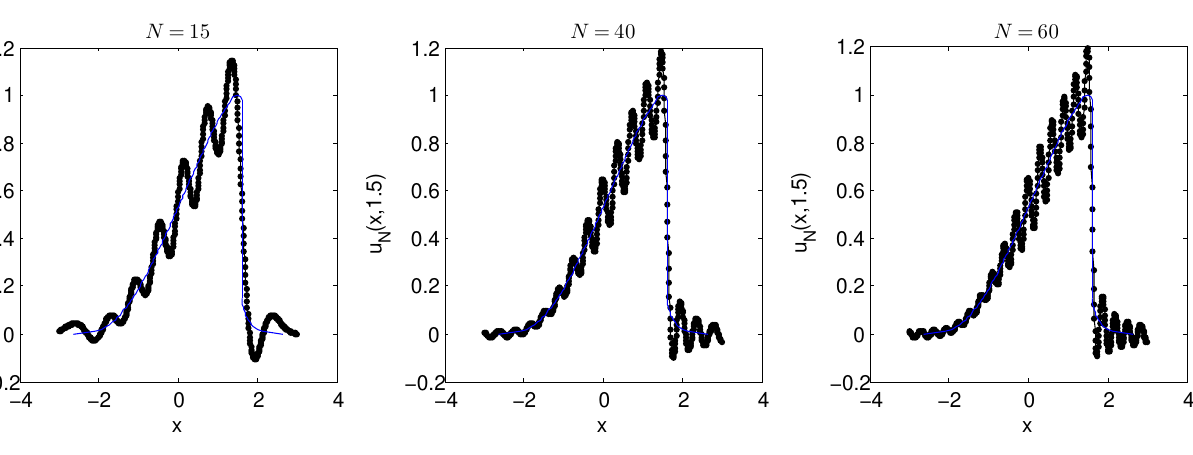}
		\caption{Solid blue line: the exact solution of Burger's equation; {dotted line: the approximate solution of the spectral scheme \eqref{eqn-spectral scheme} with $\a=2$, $\epsilon_N=0$,} $N=15,40$ and $60$, respectively.}
\label{fig-withoutviscosity}
 \end{figure}

{\subsection{Experiments with various $N$}}

In Figure \ref{fig-withoutviscosity}, we show the result of the spectral approximation without viscosity{, i.e. scheme \eqref{eqn-spectral scheme} with $\epsilon_N=0$, $\a=2$,} for $N=15,40$ and $60$, respectively. The larger $N$ is, the better resolution at the point of discontinuity we achieve, but the oscillations do not disappear  {as $N$ increases}. The Gibb's phenomenon prevents the convergence, even in the intervals where the exact solution is actually smooth. Compared with the pseudospectral viscosity method  in \cite{AR}{ (cf. Figure 6.1), the discontinuity can be resolved by our scheme} better with much smaller $N${, with the help of the scaling factor.}

\begin{figure}[!h]
	\includegraphics[trim = 8mm 115mm 0mm 115mm, clip,scale=0.75]{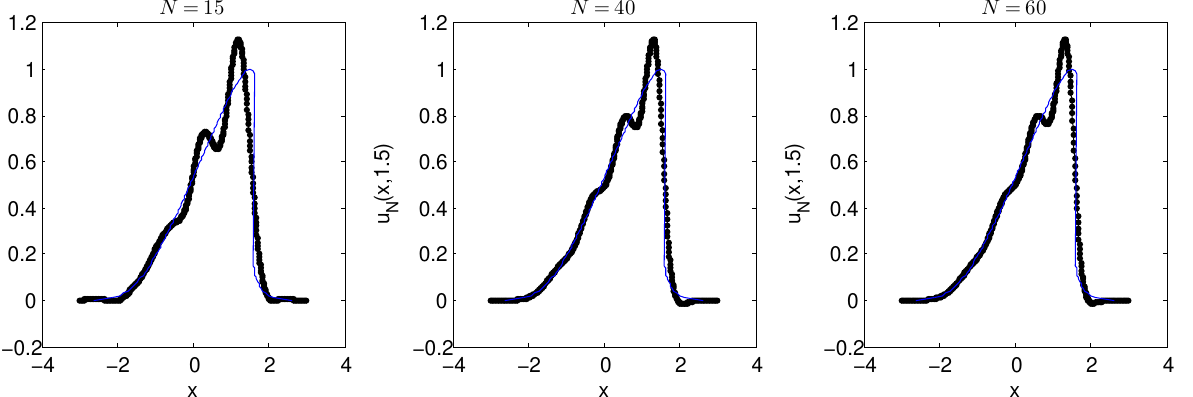}
	\caption{Solid blue line: the exact solution of inviscid Burger's equation; {dotted line: the {approximate solution of} the spectral scheme \eqref{eqn-spectral scheme} with $\a=2$, $\epsilon_N=0.5N^{-0.33}$, $m_N=\lfloor{5N^{0.16}}\rfloor$, ${\hat{q}_k^1}$ and $N=15,40,60$.}}\label{fig-withviscosity_q1}
\end{figure}

\begin{figure}[!h]
	\includegraphics[trim = 10mm 110mm 0mm 110mm, clip,scale=0.8]{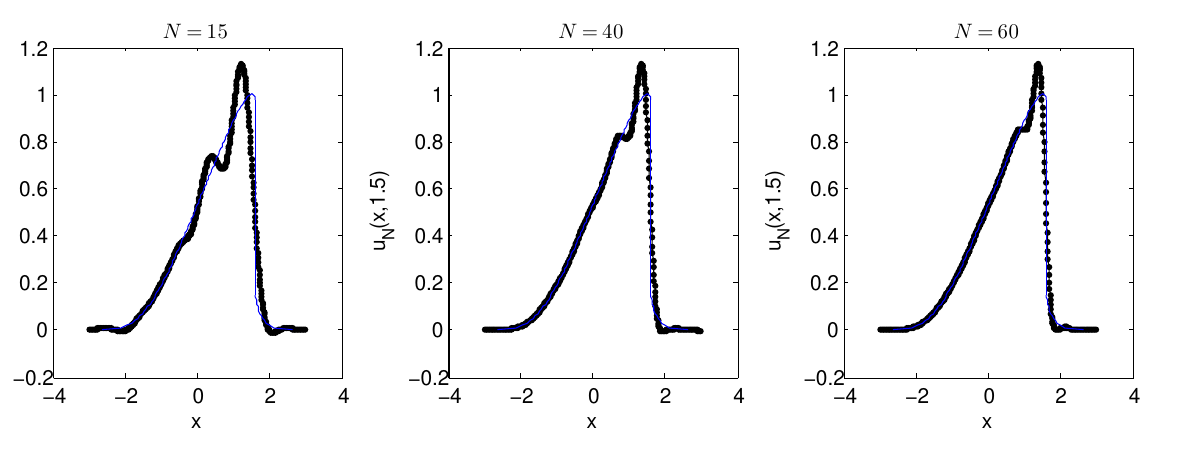}
	\caption{Solid blue line: the exact solution of inviscid Burger's equation; {dotted line: the {approximate solution of} the spectral scheme \eqref{eqn-spectral scheme} with $\a=2$, $\epsilon_N=0.5N^{-0.33}$, $m_N=\lfloor{5N^{0.16}}\rfloor$, ${\hat{q}_k}^2$ and $N=15,40,60$.}}\label{fig-withviscosity_q2}
\end{figure}

\begin{figure}[!h]
	\includegraphics[trim = 18mm 118mm 0mm 115mm, clip,scale=0.85]{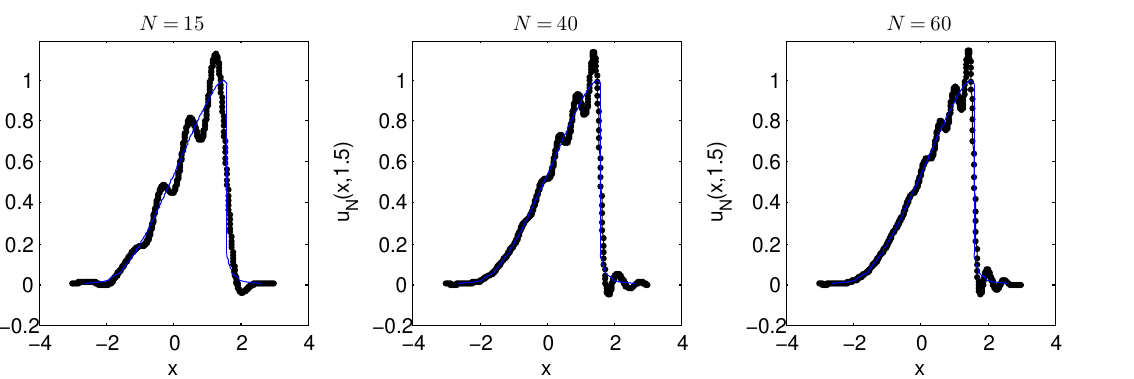}
	\caption{Solid blue line: the exact solution of inviscid Burger's equation; { dotted line: the {approximate solution of} the spectral scheme \eqref{eqn-spectral scheme} with $\a=2$, $\epsilon_N=0.5N^{-0.33}$, $m_N=\lfloor{5N^{0.16}}\rfloor$, ${\hat{q}_k}^3$ and $N=15,40,60$.}}\label{fig-withviscosity_q3}
\end{figure}

In {Figure \ref{fig-withviscosity_q1}-\ref{fig-withviscosity_q3}}, we add the viscosity term { $\hat{q}_k^1-\hat{q}_k^3$ in \eqref{eqn-qk}} by suitably tuning the parameters $\epsilon_N=0.5N^{-0.33}$ and $m_N=\lfloor{5N^{0.16}}\rfloor$ with $N=15,40$ and $60$, respectively{, where $\lfloor{\circ}\rfloor$ means the largest integer less than or equal to $\circ$. Compared with Figure 6.2 in \cite{AR}, Figure \ref{fig-withviscosity_q1}-\ref{fig-withviscosity_q3} show the similar situation with various $\hat{q}_k$. That it, the approximate solution with the least oscillations is given by the scheme \eqref{eqn-spectral scheme} with $\hat{q}_k^2$, while the best resolution of the shock is presented by that with $\hat{q}_k^3$.} The conditions on $m_N,\epsilon_N$ in Theorem \ref{thm-u} are satisfied. {Clearly, the convergence of the approximate solution} is better than {that} without viscosity.  {No matter what $\hat{q}_k$ is,} the oscillations do not alleviate as $N$ increases{, }but the discontinuity is resolved better with larger $N$. {Table \ref{table-1} list $||\D_xu_N||^2_{L^2(0,T;L^2(\R))}$, $||xu_N||^2_{L^2(0,T;L^2(\R))}$ and $||u_N||^2_{L^2(0,T;L^2(\R))}$ versus $N$, where the approximate solution $u_N$ is obtained by the spectral scheme \eqref{eqn-spectral scheme} with $\hat{q}_k^1$, $\a=\sqrt{2}$, $\e_N=0.5N^{-0.33}$.} It is used to numerically verify the condition \eqref{eqn-condition on xu_N} and the apriori estimate on $||\D_xQ_{m_N}u_N||_{L^2(0,T;L^2(\R))}^2$ \eqref{eqn-estimate of D_tQ}. The norm $||\circ||_{L^2(\R)}^2(t)$ at every time step is computed {on} the frequency side by Parseval's identity, and the integration in time in $||\circ||_{L^2(0,T;L^2(\R))}$ is performed by the trapezoid rule. The time steps are given by the adaptive algorithm {\em ode45} in Matlab. The command ``polyfit" in Matlab is used to find the minimal mean square linear fit of the growth rate of $||\D_xu_N||_{L^2(0,T;L^2(\R))}^2$,  $||xu_N||_{L^2(0,T;L^2(\R))}^2$ and  $||u_N||_{L^2(0,T;L^2(\R))}^2$ with respect to $N$, which are $N^{0.1420},N^{-0.0049}$ and $N^{0.0007}$, respectively. It numerically confirms that $||xu_N||_{L^2(0,T;L^2(\R))}^2\lesssim N^{-0.0049}$, which can be interpreted as {upper bound} independent of $N$, i.e. condition \eqref{eqn-condition on xu_N} is satisfied.  $||\D_xQ_{m_N}u_N||_{L^2(0,T;L^2(\R))}^2\lesssim N^{0.1420}+m_NN^{0.0007}\lesssim N^{0.1420}+N^{2*0.16+0.0007}\ll \frac1{\e_N}\approx N^{0.33}$, that is, the apriori estimate \eqref{eqn-estimate of D_tQ} is correct.

\begin{table}
\begin{center}
	\begin{tabular}{|c|c|c|c|c|c|c|c|}
\hline
		N&40&45&50&55&60&65&70\\
\hline
		$||\D_xu_N||_{L^2(0,T;L^2(\R))}^2$&$3.0621$&$3.1076$&$3.1504$&$3.1926$&$3.2350$&$3.2766$&$3.3145$\\
\hline
		$||xu_N||_{L^2(0,T;L^2(\R))}^2$&$0.9690$&$0.9682$&$0.9676$&$0.9671$&$0.9667$&$0.9665$&$0.9664$\\
\hline
	$||u_N||_{L^2(0,T;L^2(\R))}^2$&$1.8756$&$1.8757$&$1.8758$&$1.8759$&$1.8760$&$1.8762$&$1.8763$\\
\hline
	\end{tabular}
\caption{$||\D_xu_N||_{L^2(0,T;L^2(\R))}^2$, $||xu_N||_{L^2(0,T;L^2(\R))}^2$ and $||u_N||_{L^2(0,T;L^2(\R))}^2$ versus $N$, respectively, are displayed, where $u_N$ is the solution obtained by the spectral viscosity method \eqref{eqn-spectral scheme} with {$\hat{q}_k^1$,} $\e_N=0.5N^{-0.33}$.}\label{table-1}
\end{center}
\end{table}

\begin{figure}[!h]
    \includegraphics[trim =10mm 110mm 0mm 110mm, clip, scale=0.8]{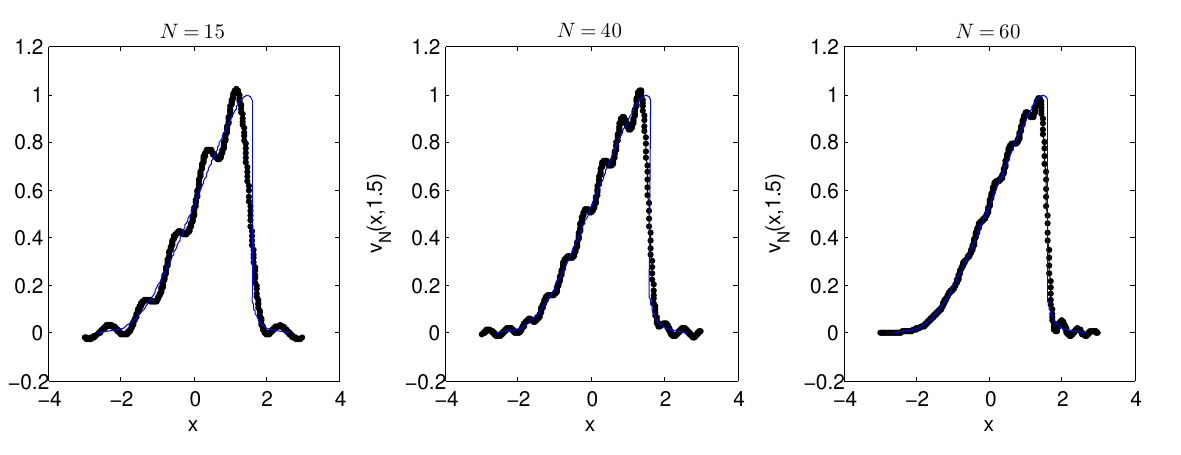}
	\caption{Solid blue line: the exact solution of inviscid Burger's equation; {dotted line: {the approximate solution of} the spectral scheme \eqref{eqn-spectral scheme-v} with $\epsilon_N=0.05N^{-0.33}$, $\a=\sqrt{2}$ and $N=15,40,60$.}}
\label{fig-withviscosity_differentN_alphasqrt2}
 \end{figure}

\begin{table}
\begin{center}
	\begin{tabular}{|c|c|c|c|c|c|c|c|}
\hline
		N&40&45&50&55&60&65&70\\
\hline
		$||\D_xv_N||_{L^2(0,T;L^2(\R))}^2$&$4.4442$&$4.5220$&$4.5099$&$4.4977$&$4.6272$&$4.9252$&$5.1875$\\
\hline
	$||v_N||^2_{L^2(0,T;L^2(\R))}$&$1.8829$&$1.8824$&$1.8814$&$1.8805$&$1.8804$&$1.8812$&$1.8819$\\
\hline
	$||x^2v_N||_{L^1(\R\times(0,T))}$&$1.9499$&$1.9153$&$1.8911$&$1.8671$&$1.8657$&$1.8844$&$1.8791$\\
\hline
	\end{tabular}
\caption{$||\D_xv_N||_{L^2(0,T;L^2(\R))}^2$, $||v_N||_{L^2(0,T;L^2(\R))}^2$ and $||x^2v_N||_{L^1(\R\times(0,T))}$ versus $N$, respectively, are displayed, where $u_N$ is the solution obtained by the spectral viscosity method \eqref{eqn-spectral scheme-v} with $\e_N=0.05N^{-0.33}$.}\label{table-2}
\end{center}
\end{table}

{Figure \ref{fig-withviscosity_differentalpha} reveals that the best resolution is obtained when $\a=\sqrt{2}$. Thus, we choose $\a=\sqrt{2}$ in the experiment of the scheme \eqref{eqn-spectral scheme-v}.} Figure  \ref{fig-withviscosity_differentN_alphasqrt2} displays the results of the spectral {scheme} \eqref{eqn-spectral scheme-v} with $\e_N=0.05N^{-0.33}$, {$\a=\sqrt{2}$} and $N=15,40,60$. It {shows} that the larger $N$ is, the better resolution at discontinuity is obtained. In Table \ref{table-2} we display $||\D_xv_N||_{L^2(0,T;L^2(\R))}^2$, $||v_N||_{L^2(0,T;L^2(\R))}^2$ and $||x^2v_N||_{L^1(\R\times(0,T))}$ versus $N$, respectively, where $v_N$ is the numerical solution to \eqref{eqn-spectral scheme-v} with $\e_N=0.05N^{-0.33}$. The norm $||\circ||_{L^2(0,T;L^2(\R))}^2$ is computed using the same rule as before. The {integration} in $||x^2v_N||_{L^1(\R\times(0,T))}$ is computed by trapezoid rule using equidistant grid. Again, we obtain the growth rate of  $||\D_xv_N||_{L^2(0,T;L^2(\R))}^2$,  $||v_N||_{L^2(0,T;L^2(\R))}^2$ and  {$||x^2v_N||_{L^1(\R\times(0,T))}$} versus $N$ by using ``polyfit" in Matlab, which are $N^{0.2431},N^{-0.0014}$ and $N^{-0.0639}$, respectively. The condition \eqref{cond-x2v_N} has been numerically verified.

{ \subsection{Different $\epsilon_N$}}

It is natural to ask how to tune $\e_N$ {in our scheme \eqref{eqn-spectral scheme}. We need to balance the resolution near the discontinuity and the oscillations away from the discontinuity in the choice of $\e_N$. With $N=40$, $\a=\sqrt{2}$ and $\hat{q}_k^1$, the results with $\e_N=0.5N^{-0.9},0.5N^{-0.33}$ and $0.5N^{-0.001}$ are plotted in Figure \ref{fig-withviscosity_differentepsilon_q1}. }  {It is not surprising that \eqref{eqn-spectral scheme} with $\e_N=0.5N^{-0.9}$ does not converge, due to the violation of condition $N^{-\frac12}\ll \e_N\ll N^{-2\theta}$, $0<\theta<\frac14$ in Theorem \ref{thm-u}, cf. Figure \ref{fig-withviscosity_differentepsilon_q1}. We expect that  the larger $\e_N$ will give a smoother approximate solution. However, as shown in Figure \ref{fig-withviscosity_differentepsilon_q1}, it is not the case, that is, the approximate solution with $\e_N=0.5N^{-0.33}$ yields fewer oscillations than that with $\e_N=0.5N^{-0.001}$. Similar situation can also be observed in Figure \ref{fig-withviscosity_differentepsilon_q2}-\ref{fig-withviscosity_differentepsilon_q3}, where the scheme \eqref{eqn-spectral scheme} with $\hat{q}_k^2$ and $\hat{q}_k^3$ instead.} 

\begin{figure}[!h]
    \includegraphics[trim = 5mm 115mm 0mm 110mm, clip, scale=.75]{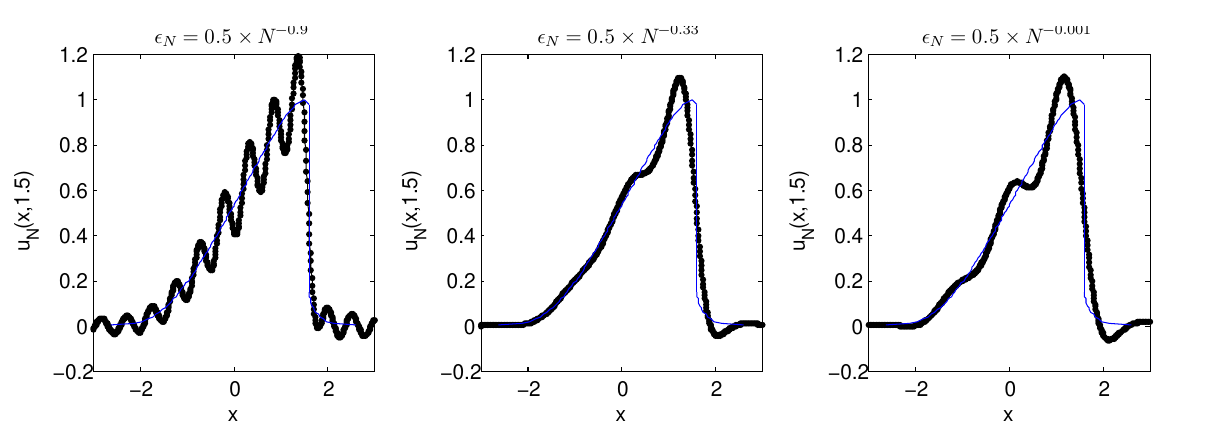}
	\caption{Solid blue line: the exact solution of inviscid Burger's equation; {dotted line: the spectral scheme \eqref{eqn-spectral scheme} with $N=40$, $\a=\sqrt{2}$, $\hat{q}_k^1$ and  $\epsilon_N=0.5N^{-0.9}, 0.5N^{-0.33}, 0.5N^{-0.001}$.}}
\label{fig-withviscosity_differentepsilon_q1}
 \end{figure}

\begin{figure}[!h]
    \includegraphics[trim = 5mm 115mm 0mm 110mm, clip, scale=.75]{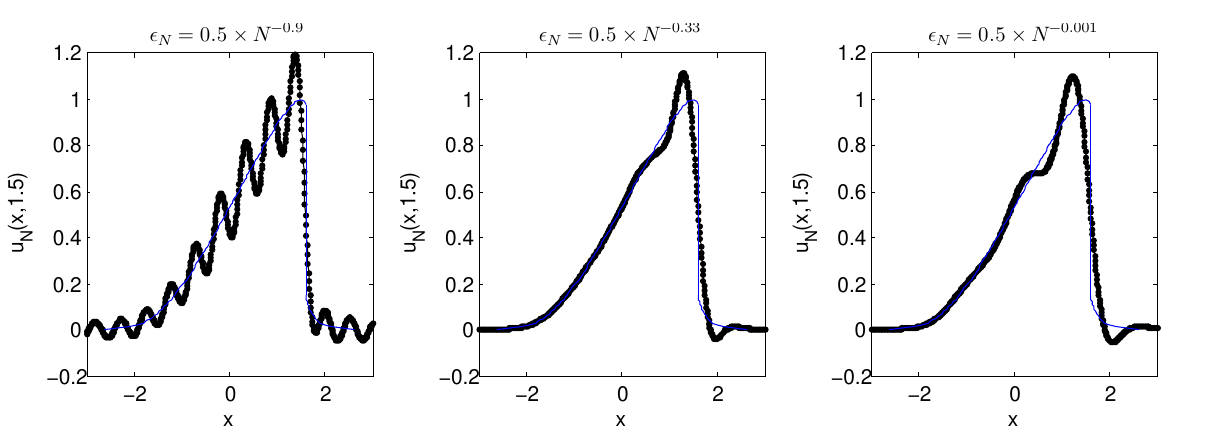}
	\caption{Solid blue line: the exact solution of inviscid Burger's equation; {dotted line: the spectral scheme \eqref{eqn-spectral scheme} with $N=40$, $\a=\sqrt{2}$, $\hat{q}_k^2$ and  $\epsilon_N=0.5N^{-0.9}, 0.5N^{-0.33}, 0.5N^{-0.001}$.}}
\label{fig-withviscosity_differentepsilon_q2}
 \end{figure}

\begin{figure}[!h]
    \includegraphics[trim = 5mm 115mm 0mm 110mm, clip, scale=.75]{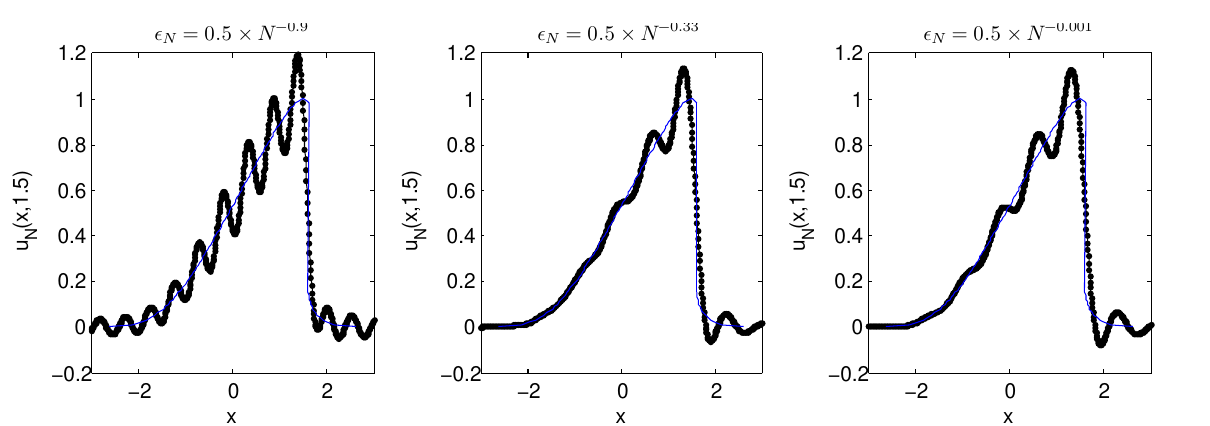}
	\caption{Solid blue line: the exact solution of inviscid Burger's equation; {dotted line: the spectral scheme \eqref{eqn-spectral scheme} with $N=40$, $\a=\sqrt{2}$, $\hat{q}_k^3$ and  $\epsilon_N=0.5N^{-0.9}, 0.5N^{-0.33}, 0.5N^{-0.001}$.}}
\label{fig-withviscosity_differentepsilon_q3}
 \end{figure}
\section{Conclusion}

In this paper, we propose two spectral viscosity methods based on the generalized Hermite functions for the solution of nonlinear scalar conservation laws in the whole line. Our {schemes have} been shown rigorously {that the approximate solutions} converge to the unique entropy solution by using compensated compactness arguments. The numerical experiments {of the inviscid Burger's equation} illustrate the implementability of our schemes. Thanks to the generalized Hermite functions, {the approximate solutions of our scheme have} fewer oscillations and better resolutions {of the discontinuity, with much smaller truncation modes $N$,} compared with those in \cite{AR}, even before adding the viscosity term.

\section*{Acknowledgements}
This project is sponsored by National Natural Science Foundation of China ({11501023}, 11471184) and Beijing Natural Science Foundation (1154011).

\end{document}